\def\2010mathclass#1{\par%
	\insert\footins{\footnotesize{\it\textup{2010} Mathematics
			Subject Classification:} #1}}
\numberwithin{equation}{section}
\newtheorem{Theorem}{Theorem}[section]
\newtheorem{Definition}{Definition}[section]
\newtheorem{Proposition}{Proposition}[section]
\newtheorem{Lemma}{Lemma}[section]
\newtheorem{assum}{Assumption} [section]
\newtheorem{Remark}{Remark}[section]
\begin{document}
\title[Kelvin-Voigt]{Exponential decay for the semilinear wave equation with localized Kelvin-Voigt damping}

\author[Astudillo Rojas]{María Rosario Astudillo Rojas}
\address{Institute of Science and Technology, Federal University of São Paulo, 12247-014, São José dos Campos, SP, Brazil}
\email{mastudillo86@gmail.com}

\author[Cavalcanti]{Marcelo M. Cavalcanti}
\address{ Department of Mathematics, State University of
Maring\'a, 87020-900, Maring\'a, PR, Brazil.}
\email{mmcavalcanti@uem.br}
\thanks{Research of Marcelo M. Cavalcanti partially supported by the CNPq Grant
300631/2003-0}

\author[Corr\^{e}a]{Wellington J. Corr\^ea}
\address{ Academic Department of Mathematics, Federal Technological University of  Paran\'a, Campuses Campo Mour\~{a}o,  87301-899, Campo Mour\~{a}o, PR, Brazil.}
\email{wcorrea@utfpr.edu.br}
\thanks{Research of Wellington J. Corrêa partially supported by the CNPq Grant 438807/2018-9}

\author[Domingos Cavalcanti]{Val\'eria N. Domingos Cavalcanti}
\address{ Department of Mathematics, State University of Maring\'a,
87020-900, Maring\'a, PR, Brazil.}
\email{vndcavalcanti@uem.br}
\thanks{Research of Val\'eria N. Domingos Cavalcanti partially supported by the CNPq Grant
304895/2003-2}

\author[Gonzalez Martinez]{Victor H. Gonzalez Martinez}
\address{Department of Mathematics, State University of
	Maring\'a, 87020-900, Maring\'a, PR, Brazil.}
\email{victor.hugo.gonzalez.martinez@gmail.com}
\thanks{Research of Victor Hugo Gonzalez Martinez supported by CAPES}

\author[Vicente Andr\'e]{  Andr\'{e} Vicente}
\address{Centro de Ci\^{e}ncias Exatas e Tecnol\'{o}gicas - Universidade Estadual do Oeste do Paran\'{a}, Cascavel-PR, Brazil.}
\email{andre.vicente@unioeste.br}

\keywords{Wave equation, Kelvin-Voigt damping, source term}

\2010mathclass{35L05, 35l20, 35B40.}%

\maketitle

\begin{abstract}
In the present paper, we are concerned with the semilinear viscoelastic wave equation subject to a locally distributed dissipative effect of Kelvin-Voigt type, posed on a bounded domain with smooth boundary.   We begin with an auxiliary problem and we show that its solution decays exponentially in the weak phase space.  The method of proof combines an observability inequality and unique continuation properties. Then, passing to the limit, we recover the original model and prove its global existence as well as the exponential stability.
\end{abstract}

\maketitle

\tableofcontents

\section{Introduction}

\subsection{Description of the Problem.}

This article is devoted to the analysis of the exponential and uniform decay rates
of solutions to the wave equation subject to a Kelvin-Voigt damping
\begin{equation}\label{eq:*}
\left\{
\begin{aligned}
&{\partial_t^2 u -\Delta u + f(u) - \operatorname{div}(a(x) \nabla \partial_t u) = 0\quad \hbox{in}\,\,\,\Omega \times (0, +\infty),}\\\
&{u=0\quad \hbox{on}\quad \partial \Omega \times (0,+\infty ),}\\\
&{u(x,0)=u_0(x)\in H_0^1(\Omega);\quad \partial_t u(x,0)=u_1(x)\in L^2(\Omega),\quad x\in\Omega,}
\end{aligned}
\right.
\end{equation}
where $\Omega$ is a bounded domain of $\mathbb{R}^n ,$ $n\geq 1$, with smooth boundary $\partial \Omega$,~ $f:\mathbb{R}\rightarrow \mathbb{R}$ is a $C^2$ function with sub-critical growth which satisfies the sign condition $f(s) s \geq 0$, for all $s\in \mathbb{R}$ (see further assumptions \eqref{ass on f} and \eqref{ass on F}).

The following assumptions are made on the function $a(x)$, responsible for the localized dissipative effect of Kelvin-Voigt type:
\begin{assum}
We assume that $a(\cdot) \in L^\infty(\Omega)$ is a nonnegative function. In addition, there exists a compact, connected set $A \subset \Omega$ with smooth boundary and non-empty interior, verifying
$$A:=\{x\in \Omega: a(x)=0\},$$
We also assume that $a\in C^0(\overline{\omega})$, where $\omega:= \Omega \backslash A $.
\end{assum}

\begin{Remark}\label{Remark1}
All the results in this paper are also true for non constant coefficients (with the same proof). We could e.g. replace $\Delta$ by $\Delta_G=\frac{1}{\rho(x)}\operatorname{div}(K(x)\nabla u)$ with $\rho(x)=\sqrt{\operatorname{det}(g_{ij})}$, where $G=(K/\rho)^{-1}$ and $K(x)$ is a symmetric positive-definite matrix such that $\alpha|\xi|^2 \leq \xi^{\top}\cdot K(x) \cdot \xi \leq \beta |\xi|^2,$ for all $\xi \in \mathbb{R}^{d}$ and $\alpha,\beta$ are positive constants, see \cite{ademirmaria}. In this case, all the geodesics of the metric $G$ will enter in the set $\omega=\Omega \setminus A$ according to Figure \ref{Figure1}.
\begin{figure}[H]
	\centering
\includegraphics[scale=0.33]{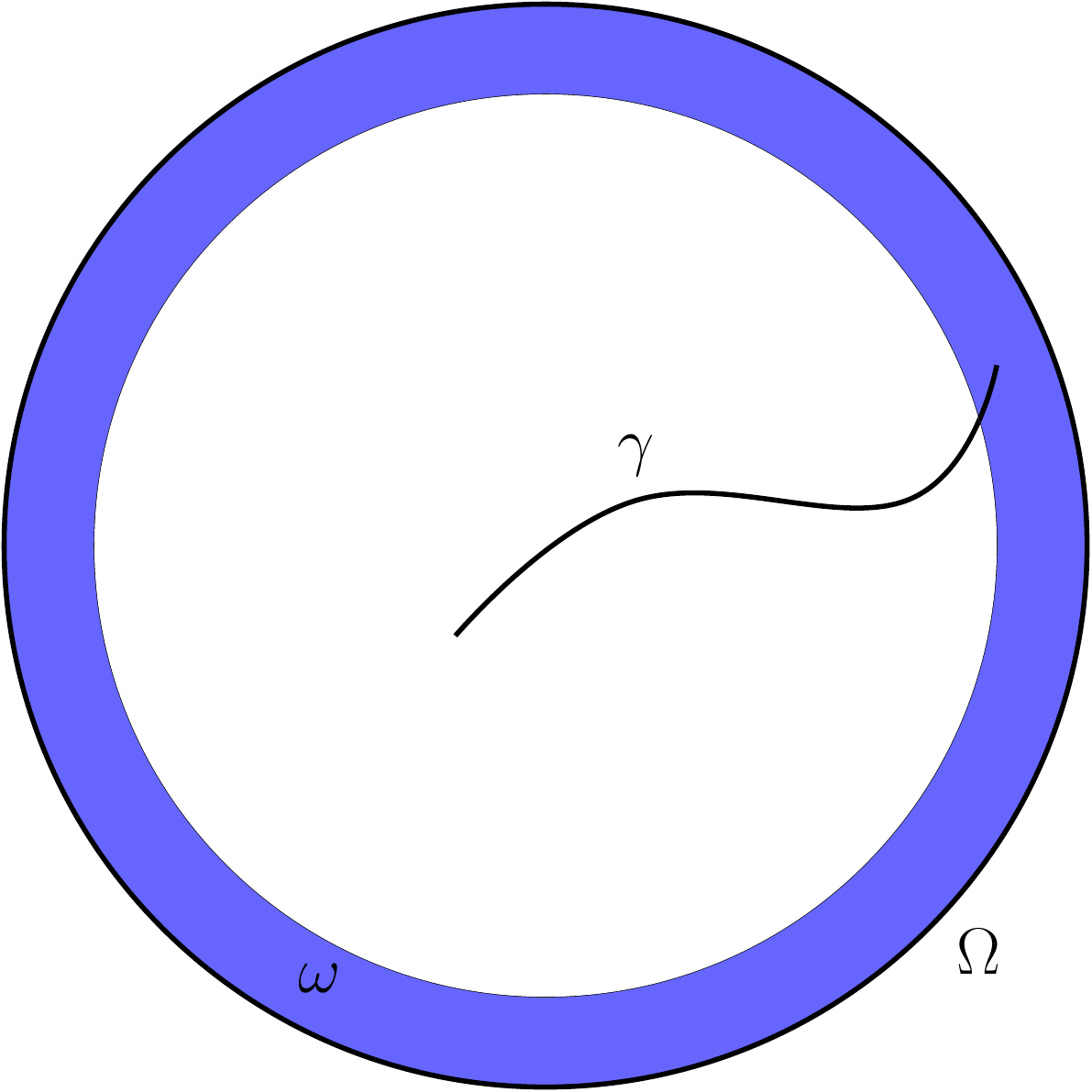}
\caption{\small It is possible to choose the density function $\rho(x) $ so that every geodesic curve $\gamma$ (in black) of the metric $G=(K/\rho)^{-1}$ enters in the damped area $\omega$ (in blue) satisfying the Geometric Control Condition. Thus, exponential decay rate estimates are expected in this case.}
\label{Figure1}
\end{figure}
\end{Remark}

When we do not have any control on the geodesics of the metric $G=(K/\rho)^{-1},$ we have to assume damping everywhere on $\Omega,$ satisfying the following assumptions:
\begin{itemize}
	\item [i)] For all $x\in \partial \Omega$, $a(x) >0$,
	\item [ii)] For all geodesic $t\in I \mapsto x(t)\in \Omega$ of the metric $G=(K/\rho)^{-1}$, with $0\in I$, there exists $t\geq 0$ such that $a(x(t))>0$.
\end{itemize}
The best way to do this is by using the ideas introduced in Cavalcanti et al. \cite{Cavalcanti2, Cavalcanti3}, namely, $a(x)\geq a_0$ in a neighborhood, $\omega,$ of the boundary $\partial \Omega,$ while $a(x) \geq a_0^\ast>0$ in $(\Omega\backslash\omega) \backslash V$, where $V=\bigcup_{i=1}^{k}V_i$ and $\operatorname{meas}(V) \geq \operatorname{meas}(\Omega\backslash\omega)-\varepsilon,$ for an arbitrary $\varepsilon>0$, according to Figure \ref{Figure3}.
\begin{figure}[H]
	\centering
	\includegraphics[scale=0.33]{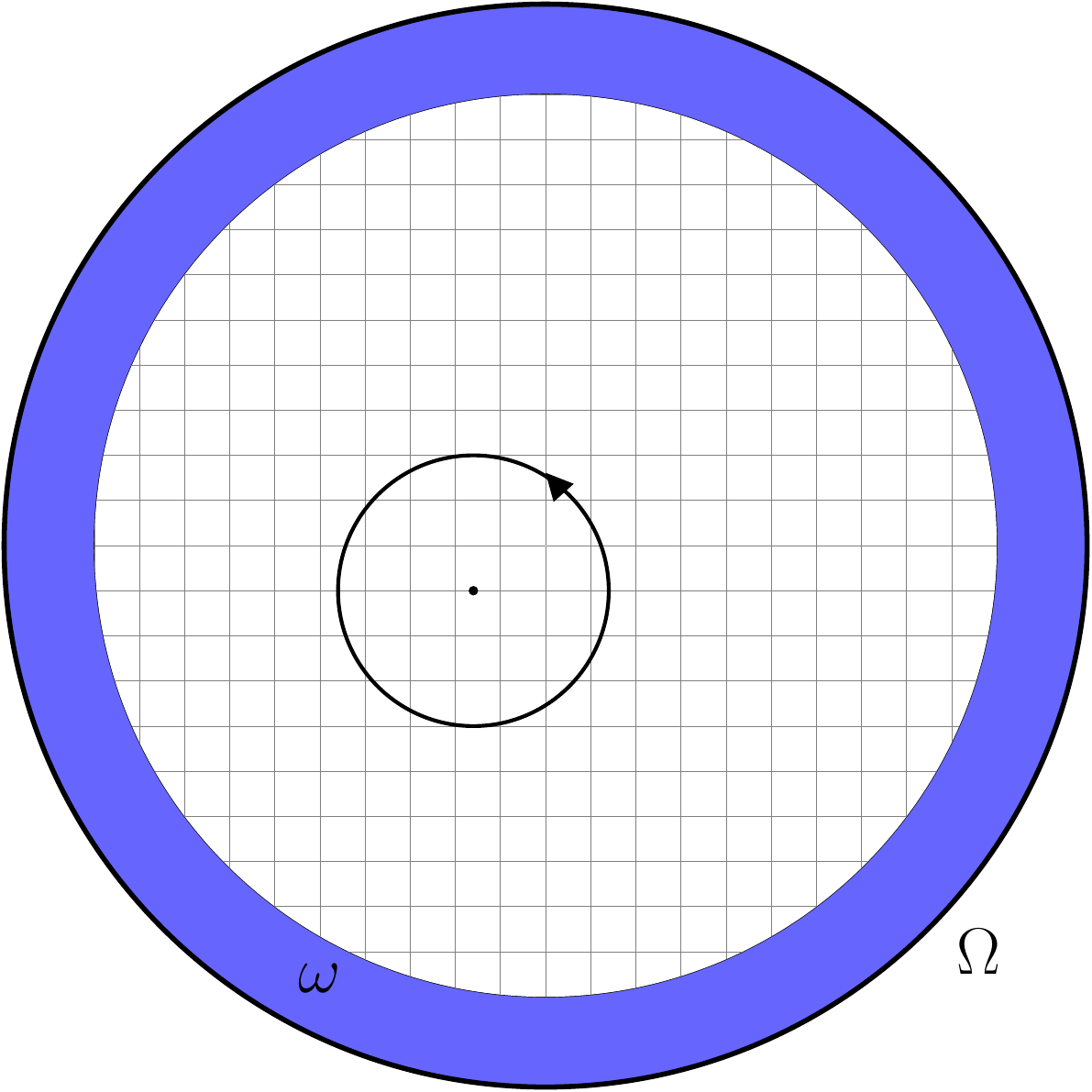}
	\caption{\small The demarcated region $\omega$ (in blue) and $(\Omega\backslash\omega) \backslash V$ (in light gray) illustrates the damped region on the manifold $(\Omega, G)$, which can be considered with measure as small as desired, however totally distributed on $\Omega$. The demarcated region $ V:=\bigcup_{i=1}^{k}V_i$ (in white) illustrates the region without damping with measure arbitrarily large but also totaly distributed in $\Omega$.}
	\label{Figure3}
\end{figure}

The paper is organized as follows.  In section 2 we give the assumptions on the function $f$, we prove some auxiliary results and we pass to the limit in the auxiliary problem.  Besides, we recover the regularity of the solutions, establishing the well-posedness Theorems.  In section 3 we obtain the observability inequality and the energy identity to the auxiliary approximate problem and we establish the exponential stability to problem (\ref{eq:*}).  Finally, the appendix is devoted to prove the convergence of the Kelvin-Voigt term and we also present some results regarding microlocal defect measures which will be useful to prove the stability result.

\subsection{Previous Results, Main Goal and Methodology.}

The decay properties of solutions to the wave equation have been widely studied by many authors under different conditions. Among the numerous papers regarding the wave equation we mention the following references: \cite{alabau:05}, \cite{Bardos}, \cite{Bortot2013}, \cite{Burq-Gerard-CR},  \cite{Cavalcanti2}, \cite{Cavalcanti3}, \cite{Dehman0}, \cite{Dehman}, \cite{Dehman2}, \cite{Gerard-JFA}, \cite{Haraux}, \cite{Joly}, \cite{Laurent1}, \cite{Lebeau}, \cite{martinez:99:RMC}, \cite{Miller}, \cite{Nakao}, \cite{RT}, \cite{Ruiz}, \cite{Tebeou-localized}, \cite{toundykov:07} and \cite{Zuazua} and a long list of references therein.

The study of problem (\ref{eq:*}) presents two main points of difficulty. The first one is to deal with the viscoelastic damping of Kelvin-Voigt type, which generates an unbounded operator.  In addition, the domain consists of two different materials, that is, there is an interaction between the elastic component (the portion of $\Omega$ where $a \equiv 0$) and the other one which is the viscoelastic component (the portion of $\Omega$ where $a>0$).

When $f \equiv 0$, the damped wave equation subject to a locally distributed viscoelastic effect of Kelvin-Voigt type has been studied, in the existing literature, by many authors, for instance, \cite{BC}, \cite{Liu0}, \cite{Liu}, \cite{Tebou} and references therein.  In \cite{Liu0},  Liu and Liu consider the problem posed in an interval $(0,L), 0<L<+ \infty$, and they prove that if  the damping coefficient $a=a(x) \in L^{\infty}(0,L)$ is effective in a subset $(\alpha,\beta)$ , such that$0<\alpha< \beta <L$, the energy does not decay uniformly.

In \cite{Cavalcanti4}, the authors studied an equation of the type $y_{tt}-\Delta y+a(x)y_t-\operatorname{div}(b(x)\nabla y_t)=0$ and by using a combination of the multiplier techniques and the frequency domain method, they show that a convenient interaction of the two damping mechanisms is powerful enough for the exponential stability of the dynamical system, provided that the coefficient of the Kelvin-Voigt damping is smooth enough and satisfies a structural condition.

In \cite{Liu}, Liu and Rao study the problem posed in higher dimensions and, even considering more regular damping coefficient and smoother initial data, they conclude that there is a loss of regularity which makes it difficult to apply the multiplier method.  To bypass these difficulties, the authors were forced to make several technical assumptions on the damping coefficient.  Tebou  in \cite{Tebou}, relaxed the damping coefficients hypothesis as well as the conditions on the feedback control region, but he had to impose a certain constraint on the gradient of the damping coefficient, which will not be required in our current study.

More recently, Astudillo et al. \cite{Astudillo} proved the exponential stability when $f(s) \ne 0$, in an inhomogeneous medium, for a particular class of density functions $\rho(x)$, which helped  to control the bicharacteristic flow by controlling its projection on the spatial domain, for dimensions $n\geq 2$.

The contribution of the present paper is to introduce a new and a more general approach to obtain the exponential stability of problem (\ref{eq:*}), which generalizes the previous results, and, in addition, can be used for other equations as well regardless of the type of dissipation mechanism considered. In particular, the method allows us to consider an inhomogeneous medium subject to a Kelvin-Voigt type damping acting in a neighborhood of the boundary or in a mesh totally distributed in the domain with measure arbitrarily small.

In order to obtain the desired stability result for the wave equation subject to the Kelvin-Voigt damping, we consider an approximate problem and we show that its solution decays exponentially to zero in the weak phase space. The method of proof combines an observability inequality, microlocal analysis tools and unique continuation properties. Then, passing to the limit, we recover the original model and prove its global existence as well as the exponential stability. The advantage of considering the approximate problem lies in the fact that we do not need to assume a unique continuation principle, as the potential function is essentially bounded and there are well known results in the literature regarding this case.

\medskip

In what follows we are going to explain briefly the methodology we are going to use.

\medskip

Denoting $v=u_{t}$ we may rewrite problem \eqref{eq:*} as the following Cauchy problem in $\mathcal{H}=H_0^{1}(\Omega) \times L^2(\Omega)$
\begin{equation}\label{cauchy problem}
\left\{
\begin{aligned}
&\frac{\partial}{\partial t}(u,v) = \mathcal{A}(u,v) + \mathcal{F}(u,v) \\
\\
& (u,v)(0)= (u_{0}, v_{0}),
\end{aligned}
\right.
\end{equation}
where the linear unbounded operator $\mathcal{A}: D(\mathcal{A}) \rightarrow \mathcal{H}$ is given by
\begin{eqnarray}\label{A'}
\mathcal{A}(u,v) = (v, \operatorname{div}(\nabla u + a(x)\nabla v)),
\end{eqnarray}
with domain
\begin{equation}\label{domain}
D(\mathcal{A}) = \{ (u,v) \in \mathcal{H} : v \in H_{0}^{1}(\Omega), \operatorname{div}( \nabla u + a(x)\nabla v) \in L^{2}(\Omega)\}
\end{equation}
and $\mathcal{F}: \mathcal{H} \rightarrow \mathcal{H}$ is the nonlinear operator
\begin{equation}\label{F}
\mathcal{F}(u,v) = (0, -f(u)).
\end{equation}

As in \cite{Liu}, it is possible to show that the operator $\mathcal{A}: D(\mathcal{A}) \subset  \mathcal{H} \rightarrow \mathcal{H}$ defined by \eqref{A'} and \eqref{domain} generates a $C_{0}$-semigroup of contractions $e^{\mathcal{A}t}$ on the energy space $\mathcal{H}$ and $D(\mathcal{A})$ is dense in $\mathcal{H}$. For more details, see \cite{Pazy}.

Given $\{u_0,u_1\} \in H_0^1(\Omega) \times L^2(\Omega)$, consider a sequence $\{u_{0,k},u_{1,k}\}\in D(\mathcal{A})$, satisfying
\begin{eqnarray}\label{conv init data}
\{u_{0,k},u_{1,k}\} \rightarrow \{u_0,u_1\} ~\hbox{ in } H_0^1(\Omega) \times L^2(\Omega).
\end{eqnarray}

 Thus, instead of studying problem (\ref{eq:*}) directly, we shall study, for each $k \in \mathbb{N}$, the auxiliary problem
\begin{equation}\label{eq:AP}
\left\{
\begin{aligned}
&{\partial_t^2 u_k -\Delta u_k + f_k(u_k) - \operatorname{div}(a(x) \nabla \partial_t u_k) + \frac{1}{k}\, b(x) \partial_t u_k= 0\quad \hbox{in}\,\,\,\Omega \times (0, +\infty),}\\\
&{u_k=0\quad \hbox{on}\quad \partial \Omega \times (0,+\infty ),}\\\
&{u_k(x,0)=u_{0,k}(x);\quad \partial_t u_k(x,0)=u_{1,k}(x),\quad x\in\Omega,}
\end{aligned}
\right.
\end{equation}
where $f_k: \mathbb{R} \longrightarrow \mathbb{R}$ is defined by
\begin{equation}\label{trunc func}
f_{k}(s):=
\begin{cases}
f(s),  &  |s|\leq k,\\
f(k),  & s> k,\\
f(-k), & s < -k.
\end{cases}
\end{equation}

Here, we use some ideas from Lasiecka and Tataru's work \cite{Lasiecka-Tataru} and
we assume an additional localized frictional damping $b(x)$ satisfying the following assumption:
\begin{assum}
$b\in C^0(\overline{\Omega})$ is a nonnegative function such that $b(x) \geq b_0 >0$ in a neighbourhood of  the boundary $\partial A$ of the set $A:=\{x\in \Omega: a(x)=0\}$, according to Figure \ref{fig1}.
\end{assum}

 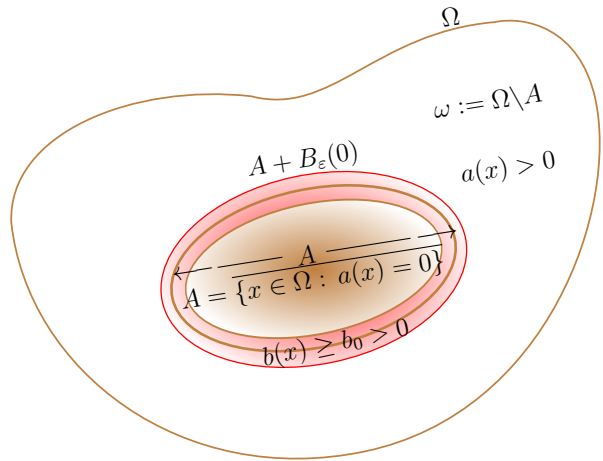
\begin{figure}[H]
\centering
\resizebox{.5\textwidth}{!}{
\begin{tikzpicture}[rotate=12]

\draw[thick,brown,scale=1.2] (-3.5,-0.5) .. controls (-3.3,0.7) and (-1.5,1) .. (0,0.5) .. controls (1,0) and (1.8,1) .. (3.4,0.8) .. controls (3.8,0.8) and (4.5,0.5) .. (4.5,-0.5) .. controls (4.5,-2) and (3,-4.5) .. (0.5,-4.5) .. controls (-2,-4.5) and (-3.7,-2.5) .. (-3.5,-0.5);

\begin{scope}[yshift=-.5cm]
\draw[red,yshift=-.25cm,scale=.8,very thick] (0.5,-2) ellipse (3.1 and 1.9);
\shade[inner color=red,yshift=-.25cm,scale=.8,very thick] (0.5,-2) ellipse (3.1 and 1.9);
\draw[brown,yshift=-.25cm,scale=.8,very thick] (0.5,-2) ellipse (2.6 and 1.3);
\draw[brown,yshift=-.23cm,scale=.8,very thick] (0.5,-2) ellipse (2.6 and 1.3);
\draw[brown,yshift=-.23cm,scale=.8,very thick] (0.5,-2) ellipse (2.9 and 1.6);
\shade[inner color=brown,very thick,scale=.8,yshift=-.3cm]  (0.5,-2) ellipse (2.6 and 1.3);

\node at (.35,-2.0)[rotate=9]{ $A=\overline{\left\{x\in\Omega:\,a(x)=0\right\}}$};
\node at (-1.5,-1.5)[rotate=9]{ $\longleftarrow$};
\node at (-0.58,-1.54)[rotate=9]{ $\overline{\ \ \ \ \ \ \ \ \ \ }$};
\node at (1.34,-1.66)[rotate=9]{ $\overline{\ \ \ \ \ \ \ \ \ \ \ \ \ }$};
\node at (2.45,-1.73)[rotate=9]{ $\longrightarrow$};
\end{scope}
\node at (3.7,-0.3)[rotate=9] {$\omega:=\Omega \backslash A$};
\node  at (3.8,-1.4)[rotate=9] {$a(x)>0$};

\node [rotate=9] at (3.4,1.2) {$\Omega$};

\node[rotate=12] at (.5,-3.6){$b(x) \geq b_0 >0$};

\node[rotate=12] at (0.32,-2.1){$A$};


\node[rotate=9] at (0.6,-0.6) {$A+B_{\varepsilon}(0)$};

\end{tikzpicture}
}
\caption{\small The Kelvin Voigt damping $a(x)$ is positive in $\omega:=\Omega \backslash A$ while the frictional damping $b(x)$ is effective in a neighbourhood of $\partial A$,  that is, $b(x)\geq b_0 >0$ in $V_{\varepsilon}=\{x\in \Omega: d(x,y)  < \varepsilon,~y \in \partial A\}$ for $\varepsilon>0$ small enough. }
\label{fig1}
\end{figure}

In Figures \ref{fig2} and \ref{fig3}, we give examples of subsets of $\Omega$ where the respective Kelvin-Voigt and frictional dissipative functions, $a(x)$ and $b(x)$, are localized.

\begin{figure}[H]
	
	\subfigure{\includegraphics[scale=0.6]{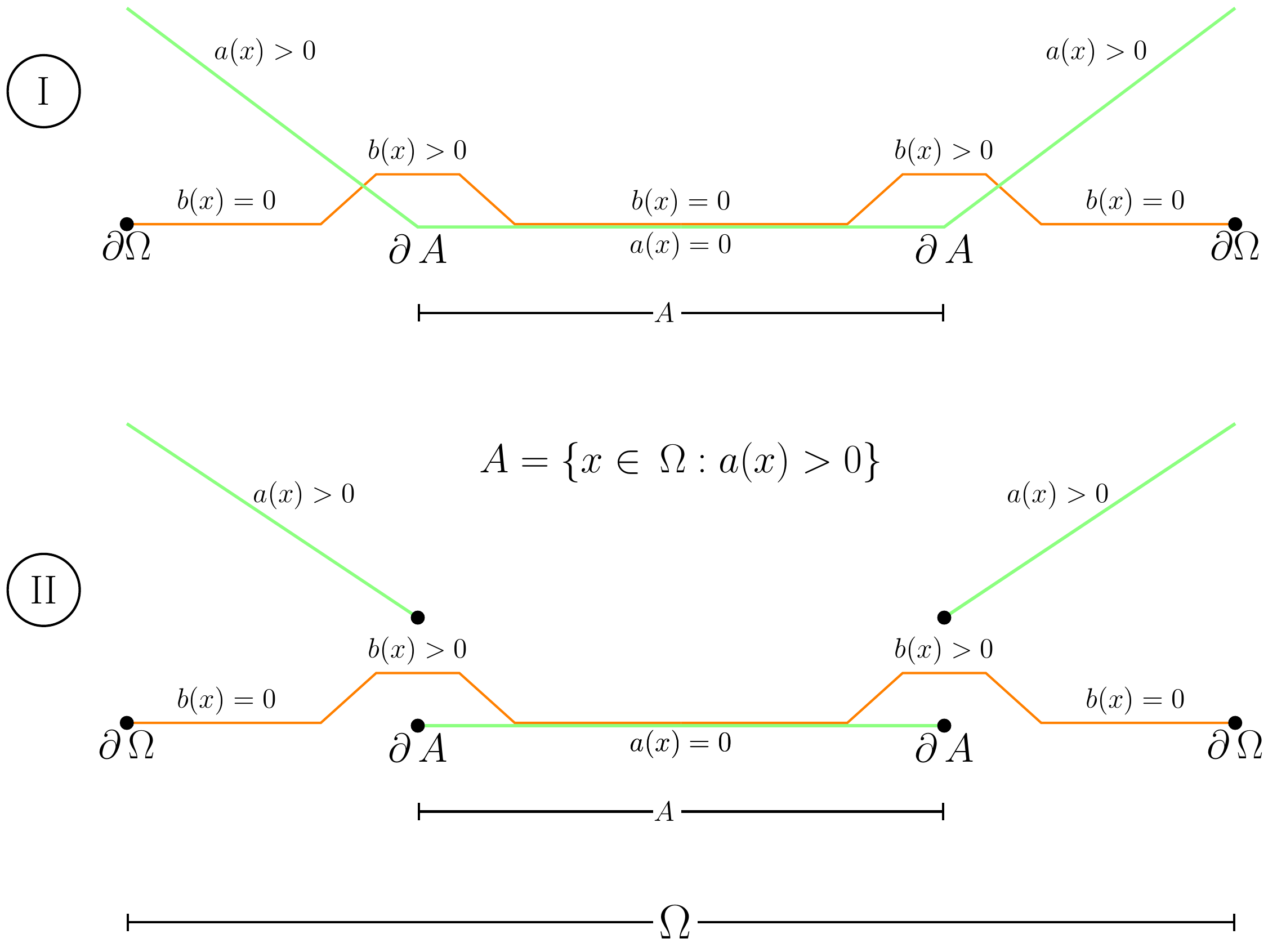}}
	\hfil
	
	\caption{Examples of one-dimensional subsets of $\Omega$ where the respective viscoelastic of Kelvin-Voigt type and frictional dissipative functions, $a(x)$ and $b(x)$,  are localized.}
	\label{fig2}
\end{figure}

\begin{figure}[H]
	
	\subfigure{\includegraphics[width=7cm]{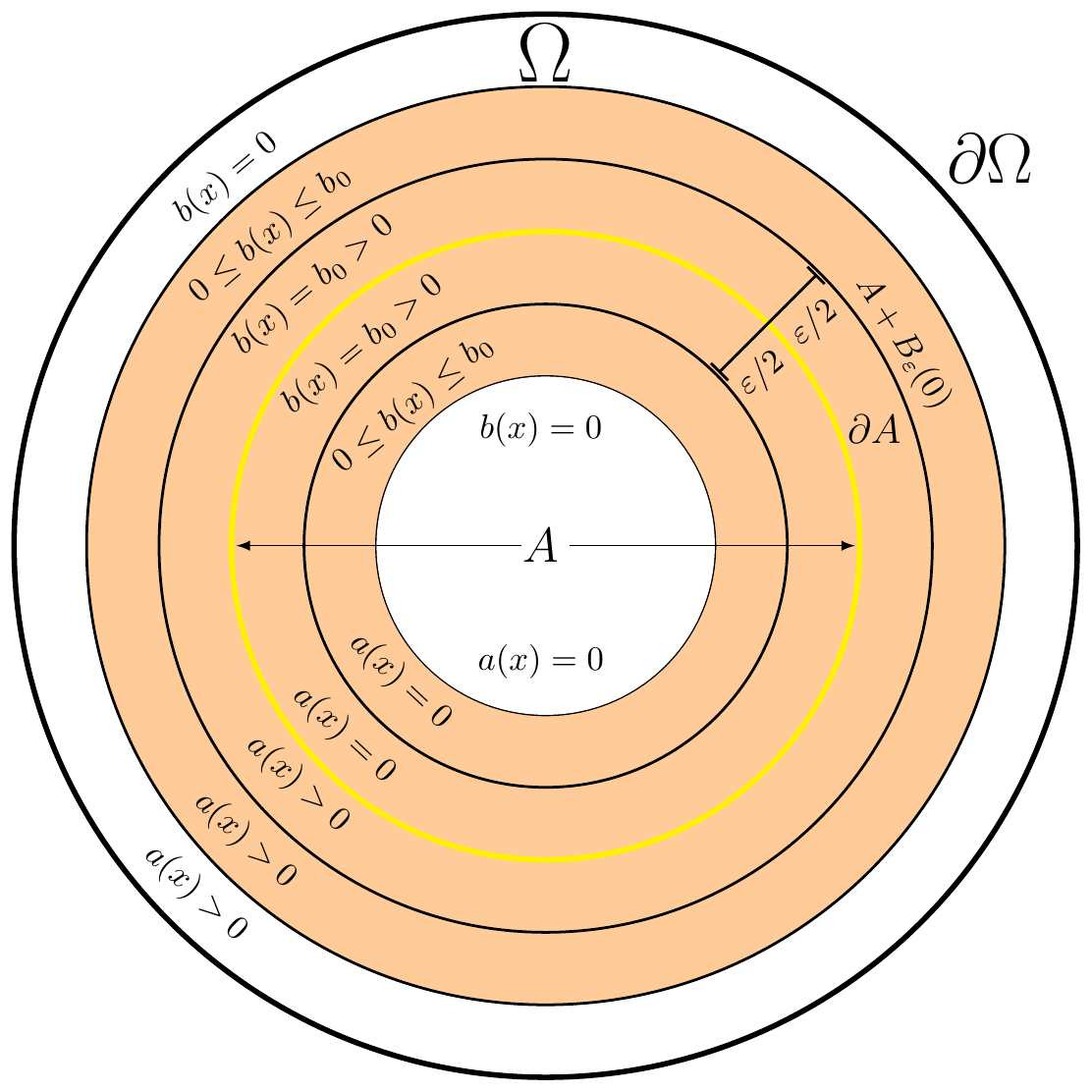}}
	\hfil
	
	\caption{Subsets of $\Omega$ where $a(x)$ and $b(x)$ are localized.}
	\label{fig3}
\end{figure}

In the presence of both dissipative effects, the energy identity associated to problem (\ref{eq:AP}) is given by
\begin{eqnarray}\label{Trunc ener Ident}
E_{u_k}(t) &+& \int_0^t \int_\Omega a(x) |\nabla \partial_t u_k(x,s)|^2\,dxds\\
 &+&\frac{1}{k}\int_0^t\int_\Omega b(x) |\partial_t u_k(x,s)|^2\,dxds = E_{u_k}(0), \hbox{ for all } t\in [0,+\infty) \hbox{ and } k\in \mathbb{N},\nonumber
\end{eqnarray}
where
\begin{eqnarray}\label{Trunc ener}
E_{u_k}(t):= \frac12 \int_{\Omega} |\partial_t u_k(x,t)|^2 + |\nabla u_k(x,t)|^2\,dx + \int_\Omega F_k(u_k(x,t))\,dx,
\end{eqnarray}
with $F_k(s):= \int_0^s f_k(\lambda)\,d\lambda$. Furthermore, we will also prove the corresponding observability inequality to problem (\ref{eq:AP}), that is, we shall prove that there exists a positive constant $C$  which does not depend on $k$, verifying
\begin{equation}\label{Truc Obs Ineq}
E_{u_k}(0) \leq C\,\int_0^{T}\int_{\Omega}\left(\frac{1}{k}\,b(x)|\partial_t u_k|^2 + a(x) |\nabla \partial_t u_k|^2 \right)\,dx\,dt,~\hbox{ for all } T\geq T_0.
\end{equation}

\medskip

In what follows, given $\delta>0$,  $V_{\delta}$ will denote the neighbourhood of $\partial A$ given by
$$V_{\delta}:=\{x\in \Omega: d(x,y)  < \delta,~y \in \partial A\}.$$

It is worth mentioning that if $a(x)=0$, for all $x\in \Omega$, the frictional damping $b(x)$ is not strong enough to provide the exponential and uniform decay of the energy, observe that in this case it violates the geometric control condition (GCC), see references \cite{Bardos}, \cite{Burq-Gerard-CR}. However, the frictional damping plays a key role which we will describe next.

Let $A_\varepsilon:= A + \overline{B_{\varepsilon/2}(0)}$ and $\mathcal{C}:=A\backslash A_\varepsilon=\{x\in A: d(x,y)>\varepsilon/2,~y\in \partial A\}$. To prove (\ref{Truc Obs Ineq}) and therefore the stability result, we will argue by contradiction and we will obtain a sequence $\{u_k^m\}_{m\in \mathbb{N}}$ of solutions to problem (\ref{eq:AP}) such that $ E_{u_k^m}(0)=1$.  The contradiction is obtained proving that $E_{u_k^m}(0)\rightarrow 0$ as $m \rightarrow +\infty$.  By exploiting the properties of $a(\cdot)$ and $b(\cdot)$ and the Poincar\'e inequality, we shall prove that
\begin{eqnarray}\label{convergence}\qquad
\int_0^T \int_{\Omega\backslash \mathcal{C}}  |\partial_t u_k^m|^2 \, dxdt \rightarrow 0,\,\, \hbox{ as } m\rightarrow +\infty.
\end{eqnarray}
To propagate the convergence (\ref{convergence}) to the whole set $\Omega \times (0,T)$, we use microlocal analysis arguments. Indeed, we consider the microlocal defect measure $\mu$, associated to $\{\partial_t u_k^m\}_{m\in \mathbb{N}}$.  First, we shall establish the convergence
\begin{eqnarray*}
\partial_t^2 \partial_t u_k^m - \Delta \partial_t u_k^m \rightarrow 0~ \hbox{ strongly in }~H^{-2}_{loc}(\Omega \times (0,T)), \hbox{ as }m \rightarrow +\infty,
\end{eqnarray*}
 which is enough to ensure that the $\hbox{supp}(\mu)$ is contained in the characteristic set of the wave operator $\{\tau^2=||\xi||^2\}$. However, the last convergence is not sufficient for propagation, since we need a stronger convergence, namely,
 \begin{equation*}
 	\partial_t^2 \partial_t u_k^m - \Delta \partial_t u_k^m \rightarrow 0~ \hbox{ strongly in }~H^{-1}_{loc}(\Omega \times (0,T)), \hbox{ as }m \rightarrow +\infty,
 \end{equation*}
The problematic term, responsible for this lack of regularity, is $ \operatorname{div}(a(x) \nabla \partial_t u_k^m)$ , because
\begin{equation*}
\partial_t \left(\operatorname{div}(a(x) \nabla \partial_t u_k^m) \right)  \rightarrow 0 \hbox{ strongly in } H^{-2}_{loc}(\Omega \times (0,T)), \hbox{ as }m \rightarrow +\infty.
\end{equation*}

To circumvent this difficulty, we use the fact that the frictional damping is effective in the neighbourhood $V_{\varepsilon}$ of $\partial A$.  Note that $a(x) \nabla \partial_t u_k^m=0$ in $A \times (0,T)$, consequently, since
\begin{eqnarray*}
\partial_t^2 u_k^m -\Delta u_k^m= - f_k(u_k^m) - \frac{1}{k}\, b(x) \partial_t u_k^m  ~\hbox{ in } A\times (0,T),.
\end{eqnarray*}
Assuming for a moment that $f_k(u_k^m)\rightarrow 0$ in $L^2(\Omega \times (0,T))$ as $m\rightarrow +\infty$, we obtain
\begin{eqnarray}\label{Dalembert}
\Box \partial_t u_k^m \rightarrow 0 \hbox{ in } H^{-1}_{loc}(\hbox{int}\, A \times (0,T)), \hbox{ as }m \rightarrow +\infty.
\end{eqnarray}

From convergence (\ref{Dalembert}) we deduce that $\mu$ propagates along the bicharacteristic flow of the D'Alembertian operator, which means that if there is $\omega_0=(t_0,x_0,\tau_0,\xi_0)$ such that \linebreak $\omega_0=(t_0,x_0,\tau_0,\xi_0) \notin \hbox{supp}(\mu)$, the whole bicharacteristic issued from $\omega_0$ does not belong to $\hbox{supp}(\mu)$. However, since $\hbox{supp}(\mu) \subset \mathcal{C} \times (0,T)\subset A \times (0,T)$, and the frictional damping acts in both sides of the boundary $\partial A$, we can propagate the kinetic energy from $(V_{ {\varepsilon} /2} \cap A) \times (0,T)$ towards the set $\mathcal{C} \times (0,T)$.

Consequently,
\begin{eqnarray}\label{propagation}
\int_0^T \int_\Omega |\partial_t  u_k^m(x,t)|^2\,dxdt \rightarrow 0, ~\hbox{ as }m \rightarrow + \infty.
\end{eqnarray}

In light of convergence (\ref{propagation}) and an argument of equipartition of energy, we can conclude that $E_{u_k^m}(0)\rightarrow 0$, as $m \rightarrow +\infty$, obtaining the desired contradiction. This will be clarified in section 3.

 The next step in the proof is to remove the frictional damping term. For this purpose we are going to prove that the sequence $\{u_k\}_{k\in \mathbb{N}}$ of solutions to problem (\ref{eq:AP}) converges to the unique solution of problem  (\ref{eq:*}). In addition, passing to the limit in (\ref{Trunc ener Ident}) and (\ref{Truc Obs Ineq}), we achieve the energy identity and  the observability inequality associated to problem (\ref{eq:*}), respectively, which are the necessary and sufficient ingredients to establish its exponential stability result.

 \section{Convergence of the auxiliary problem}
\setcounter{equation}{0}

\medskip
\subsection{The limit process.}
\medskip

In this section we prove that the sequence $\{u_k\}_{k\in \mathbb{N}}$ of solutions to problem (\ref{eq:AP}) converges to the unique solution to the problem (\ref{eq:*}).

The function $f$ satisfies the following hypotheses:
\begin{assum}\label{assumption 2.1}
$f:\mathbb{R}\rightarrow \mathbb{R}$ is a $C^2$ function with sub-critical growth; satisfying the sign condition $f(s) s \geq 0$, for all $s\in \mathbb{R}$, and
\begin{eqnarray}\label{main ass on f}
f(0)=0, \ \ \ |f^{(j)}(s)| \leq k_{0}(1 + |s|)^{p-j}, \hbox{ for all } s \in \mathbb{R} \hbox{ and } j=1,2.
\end{eqnarray}
In particular, we obtain from (\ref{main ass on f}),
\begin{eqnarray}\label{ass on f}
|f(r) - f(s)| \leq c \left(1 + |s|^{p-1} + |r|^{p-1} \right)|r-s|,~\hbox{ for all } s,r\in \mathbb{R},
\end{eqnarray}
for some $c>0$, with
\begin{eqnarray}\label{ass on f'}
1\leq p \leq \frac{n+2}{n-2}~\hbox{ if }n\geq 3~\hbox{ or } ~p\geq 1~\hbox{ if }~ n=1,2.
\end{eqnarray}

In addition,
\begin{eqnarray}\label{ass on F}
0 \leq F(s) \leq f(s) s,~\hbox{ for all } s\in \mathbb{R},
\end{eqnarray}
where $F(\lambda):= \int_0^\lambda f(s)\,ds$.
\medskip
\end{assum}

We begin with some preliminary results.
\begin{Lemma}\label{Lemma1}
The distributional derivative $f_k'$ of the function defined in (\ref{trunc func}) is the essentially bounded function $g_k:\mathbb{R} \rightarrow \mathbb{R}$ given by
\begin{equation}\label{drrivative}
g_k(s):=
\begin{cases}
f'(s), & |s|\leq k,\\
0, & s> k,\\
0, & s < -k.
\end{cases}
\end{equation}
\end{Lemma}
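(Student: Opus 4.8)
The plan is to observe that the truncated function $f_k$ is continuous and piecewise $C^1$ with no jump discontinuities, so that its distributional derivative coincides with its classical (a.e.) derivative and carries no singular part at the junction points $s=\pm k$. First I would verify that $f_k\in C^0(\mathbb{R})$. On the three pieces $(-\infty,-k)$, $(-k,k)$ and $(k,+\infty)$ the function is given respectively by the constants $f(-k)$, by $f(s)$, and by the constant $f(k)$, each continuous. At the junction $s=k$ the left-hand limit is $\lim_{s\to k^-}f(s)=f(k)$, by continuity of $f$, which agrees with the constant value $f(k)$ assigned on the right; the same computation at $s=-k$ gives continuity there. Hence $f_k$ is continuous on all of $\mathbb{R}$, and in fact Lipschitz, since its classical derivative is bounded off the two points $\pm k$.

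Next, for the essential boundedness of $g_k$, I would invoke the growth hypothesis \eqref{main ass on f} with $j=1$: for $|s|\le k$ we have $|g_k(s)|=|f'(s)|\le k_0(1+|s|)^{p-1}\le k_0(1+k)^{p-1}$, while $g_k\equiv 0$ for $|s|>k$. Thus $\|g_k\|_{L^\infty(\mathbb{R})}\le k_0(1+k)^{p-1}<+\infty$, so that $g_k\in L^\infty(\mathbb{R})$ as claimed.

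To identify $f_k'=g_k$ in the sense of distributions, I would fix an arbitrary $\varphi\in C_c^\infty(\mathbb{R})$ and compute $\langle f_k',\varphi\rangle=-\int_{\mathbb{R}}f_k(s)\varphi'(s)\,ds$ by splitting the integral over the three intervals and integrating by parts on each. On $(-k,k)$ the integration by parts produces $\int_{-k}^{k} f'(s)\varphi(s)\,ds$ together with the boundary terms $f(k)\varphi(k)-f(-k)\varphi(-k)$; the integrals over $(-\infty,-k)$ and $(k,+\infty)$ contribute no interior term, because $f_k$ is constant there, but exactly the boundary terms $+f(-k)\varphi(-k)$ and $-f(k)\varphi(k)$. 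Because $f_k$ is continuous at $\pm k$, these boundary contributions cancel in pairs, leaving $\langle f_k',\varphi\rangle=\int_{\mathbb{R}}g_k(s)\varphi(s)\,ds$ for every test function $\varphi$, which is precisely the asserted identity.

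The only point that requires care, and which I would flag as the crux, is exactly this cancellation of boundary terms: it is the continuity of $f_k$ at $s=\pm k$ that guarantees the distributional derivative has no Dirac masses at the junction points, so that $f_k'$ is genuinely the bounded function $g_k$ rather than a measure. Equivalently, one may package the whole argument by noting that the Lipschitz function $f_k$ is absolutely continuous, and that for absolutely continuous functions the distributional derivative equals the a.e. classical derivative, which is $g_k$.
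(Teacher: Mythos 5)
Your proposal is correct and follows essentially the same route as the paper: the paper's proof likewise splits $-\int_{\mathbb{R}}f_k(s)\varphi'(s)\,ds$ over the three intervals $(-\infty,-k)$, $(-k,k)$, $(k,+\infty)$, integrates by parts, and uses the cancellation of the boundary terms at $s=\pm k$ to conclude $\langle f_k',\varphi\rangle=\int_{\mathbb{R}}g_k(s)\varphi(s)\,ds$. Your additional verification of the essential boundedness of $g_k$ via \eqref{main ass on f} is a welcome supplement the paper leaves implicit.
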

\begin{proof}
Take $\varphi \in C_0^\infty(\mathbb{R})$. Once $f_k\in L^1_{loc}(\mathbb{R})$ we have
\begin{eqnarray*}
&&\left<f_k',\varphi \right>_{\mathcal{D}'(\mathbb{R}), \mathcal{D}(\mathbb{R})}= - \int_{\mathbb{R}} f_k(s) \varphi'(s)\,ds\\
&&= -\left[\int_{-\infty}^{-k} f_k(s) \varphi'(s)\,ds + \int_{-k}^k f_k(s) \varphi'(s)\,ds  + \int_{k}^{+\infty} f_k(s) \varphi'(s)\,ds  \right]\\
&&= - \left[ f(-k) \varphi(-k)  + f(k) \varphi(k) - f(-k) \varphi(-k) - \int_{-k}^k f'(s) \varphi(s)\,ds - f(k)\varphi(k)\right]\\
&& =  \int_{-k}^k f'(s) \varphi(s)\,ds = \int_{\mathbb{R}} g(s) \varphi(s) \,ds.
\end{eqnarray*}
\end{proof}

Consider the following result which will be useful to the proof of Lemma \ref{Lema2}.
\begin{Theorem}\label{brezis}
	Let $u \in W^{1,p}(I)$ with $1 \leq p \leq \infty$, where $I$ is a bounded interval of $\mathbb{R}$. Then, there exists $\widetilde{u} \in C(\bar{I})$ such that
	\begin{equation*}
	u=\widetilde{u} \hbox{ a.e. in } I
	\end{equation*}
	and
	\begin{equation*}
	\widetilde{u}(x)-\widetilde{u}(y)=\int_{y}^{x} u'(t)dt \hbox{ for all } x, y \in \bar{I}.
	\end{equation*}
\end{Theorem}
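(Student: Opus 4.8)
The statement to prove is the classical Sobolev embedding theorem in one dimension (Theorem \ref{brezis}), which asserts that every $W^{1,p}(I)$ function on a bounded interval has a continuous representative satisfying the fundamental theorem of calculus. This is a standard result (it appears as Theorem 8.2 in Brezis's functional analysis textbook), so my plan is to reconstruct the classical mollification argument.

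The plan is to proceed in three stages. First I would fix a smooth function $u'\in L^p(I)\subset L^1_{loc}(I)$ (the weak derivative, which exists by hypothesis) and define the candidate continuous representative by integrating it: choose a base point $y_0\in \bar I$ and set $\widetilde u(x):=\int_{y_0}^x u'(t)\,dt$. Since $u'\in L^1(I)$ on the bounded interval $I$, the function $\widetilde u$ is well-defined and absolutely continuous, hence continuous on $\bar I$; this immediately gives $\widetilde u(x)-\widetilde u(y)=\int_y^x u'(t)\,dt$ for all $x,y\in\bar I$ by additivity of the integral. The only remaining task is to show $u=\widetilde u$ almost everywhere.

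The second and crucial stage is verifying $u=\widetilde u$ a.e. The key observation is that $\widetilde u$ and $u$ have the same distributional derivative: for any test function $\varphi\in C_0^\infty(I)$, I would compute $\int_I \widetilde u\,\varphi'\,dx$ and, using Fubini's theorem to interchange the order of integration in the double integral, show it equals $-\int_I u'\,\varphi\,dx$, which by definition of the weak derivative equals $\int_I u\,\varphi'\,dx$. Hence $\int_I(\widetilde u-u)\varphi'\,dx=0$ for all $\varphi\in C_0^\infty(I)$. The third stage is the standard lemma that a locally integrable function whose integral against all derivatives $\varphi'$ of test functions vanishes must be (a.e. equal to) a constant. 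To prove this lemma, I would fix a test function $\psi_0$ with $\int_I\psi_0=1$, and for arbitrary $\varphi\in C_0^\infty(I)$ write $\varphi=\left(\int_I\varphi\right)\psi_0+\chi'$, where $\chi(x):=\int_{a}^x\left(\varphi-\left(\int_I\varphi\right)\psi_0\right)$ is itself a valid test function (it is compactly supported precisely because the bracketed term has zero total integral). Substituting this decomposition shows that $\widetilde u-u$ integrates against every $\varphi$ to give a fixed constant times $\int_I\varphi$, which forces $\widetilde u - u$ to equal that constant a.e. Adjusting the base point $y_0$ (or absorbing the constant) then yields $u=\widetilde u$ a.e.

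The main obstacle, and the only genuinely nontrivial point, is the constancy lemma in the third stage, together with the justification that the correction $\chi$ lies in $C_0^\infty(I)$. The decomposition trick is the heart of the matter: one must verify that $\chi$ vanishes near both endpoints of $I$, which hinges on the normalization $\int_I\psi_0=1$ so that $\chi$ reaches the value zero again at the right endpoint. Everything else—the continuity of $\widetilde u$, the Fubini interchange, and the additivity identity—is routine once this is in place. I would also note that the borderline case $p=\infty$ requires no change, since $L^\infty(I)\subset L^1(I)$ on a bounded interval, and the case $p=1$ is already the weakest hypothesis, so the single argument covers the full range $1\le p\le\infty$ uniformly.
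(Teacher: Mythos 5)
Your proposal is correct: the paper itself only cites Brezis (Theorem 8.2) for this statement, and your argument is precisely the standard proof given there — define $\widetilde u(x)=\int_{y_0}^{x}u'(t)\,dt$, show via Fubini that $\widetilde u$ and $u$ share the same weak derivative, and invoke the du Bois-Reymond constancy lemma (proved by the $\varphi=(\int_I\varphi)\psi_0+\chi'$ decomposition) to conclude $u=\widetilde u+C$ a.e., absorbing the constant at the end. No gaps; the key normalization $\int_I\psi_0=1$ ensuring $\chi\in C_0^\infty(I)$ is correctly identified as the crux.
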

\begin{proof}
See Brezis \cite{Brezis}, Theorem 8.2.
\end{proof}
\begin{Lemma}\label{Lema2}
For each $k\in \mathbb{N}$, there exists a positive constant $C_k$ verifying
\begin{equation*}
|f_k(r)-f_k(s)|\leq C_k|r-s| \hbox{ for every} r,s \in  \mathbb{R},
\end{equation*}
where $f_k$ is the function defined in \eqref{trunc func}.
\end{Lemma}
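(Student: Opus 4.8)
The plan is to show that $f_k$ is globally Lipschitz by establishing that its distributional derivative is essentially bounded and then invoking Theorem \ref{brezis} to conclude. The key observation is that Lemma \ref{Lemma1} already identifies $f_k' = g_k$ as an essentially bounded function: on $|s|\leq k$ we have $g_k(s)=f'(s)$, which is bounded by the continuity of $f'$ on the compact interval $[-k,k]$, and $g_k(s)=0$ for $|s|>k$. Thus setting $C_k := \|g_k\|_{L^\infty(\mathbb{R})} = \max_{|s|\leq k}|f'(s)|$ gives a finite constant (finite precisely because $k<\infty$, which is the whole point of truncating).

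First I would fix an arbitrary bounded interval $I\subset\mathbb{R}$ containing both $r$ and $s$. Since $f_k\in L^\infty(\mathbb{R})\subset L^p(I)$ and $f_k'=g_k\in L^\infty(\mathbb{R})\subset L^p(I)$ for every $1\leq p\leq\infty$, we have $f_k\in W^{1,p}(I)$. Applying Theorem \ref{brezis} with, say, $p=\infty$, there is a continuous representative $\widetilde{f_k}\in C(\bar I)$ equal to $f_k$ almost everywhere and satisfying the fundamental-theorem-of-calculus identity
\begin{equation*}
\widetilde{f_k}(r)-\widetilde{f_k}(s)=\int_s^r g_k(t)\,dt.
\end{equation*}
Because $f_k$ is already continuous on $\mathbb{R}$ by construction (the three pieces in \eqref{trunc func} match at $s=\pm k$), the representative $\widetilde{f_k}$ coincides with $f_k$ everywhere on $\bar I$, so the identity holds with $f_k$ itself.

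From this integral representation the estimate is immediate:
\begin{equation*}
|f_k(r)-f_k(s)| = \left| \int_s^r g_k(t)\,dt \right| \leq \|g_k\|_{L^\infty(\mathbb{R})}\,|r-s| = C_k\,|r-s|.
\end{equation*}
Since $r,s\in\mathbb{R}$ were arbitrary (we simply choose $I$ large enough to contain them), the bound holds for all $r,s\in\mathbb{R}$ with the single constant $C_k$ independent of the chosen interval. I do not anticipate a serious obstacle here; the only point requiring a moment of care is verifying that $g_k$ is genuinely essentially bounded, and this reduces to the boundedness of $f'$ on the compact set $[-k,k]$, guaranteed by the $C^2$ hypothesis on $f$ in Assumption \ref{assumption 2.1}. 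It is worth emphasizing that $C_k$ will in general blow up as $k\to\infty$ (since $\sup_{|s|\leq k}|f'(s)|$ may grow), which is exactly why this Lipschitz property is only local in $k$ and the truncation is essential.
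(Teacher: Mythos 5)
Your proof is correct and follows essentially the same route as the paper: both apply Theorem \ref{brezis} to write $f_k(r)-f_k(s)=\int_s^r f_k'(\xi)\,d\xi$, invoke Lemma \ref{Lemma1} to identify $f_k'=g_k$, and bound the integral by $\sup_{|s|\leq k}|f'(s)|\,|r-s|$. Your additional checks (that $f_k\in W^{1,p}(I)$ and that the continuous representative coincides with $f_k$) are welcome details the paper leaves implicit, but they do not change the argument.
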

\begin{proof}
Consider $s, r\in \mathbb{R}$ with $s< r$. Applying Theorem \ref{brezis} for $I=]s,r[$,  it follows that
\begin{eqnarray*}
f_k(r) - f_k(s) = \int_s^r f_k'(\xi)\,d\xi.
\end{eqnarray*}
Thus, Lemma \ref{Lemma1} yields the following inequality:
\begin{eqnarray}\label{Lipschitz}
|f_k(r) - f_k(s)| \leq \int_s^r |f_k'(\xi)|\,d\xi \leq \sup_{s\in [-k,k]}|g_k(s)|\,|r-s|,
\end{eqnarray}
which concludes the proof.
\end{proof}

From Lemma \ref{Lema2}, for each $k\in \mathbb{N}$, standard arguments of Semigroup theory yield that problem (\ref{eq:AP}) possesses an unique regular solution $u_k$ in the class
\begin{eqnarray*}
C^1([0,\infty); H_0^1(\Omega)) \cap C^2([0,\infty); L^2(\Omega)),
\end{eqnarray*}
and, furthermore, the map $t \mapsto \operatorname{div}(\nabla u(t) + a(x) \partial_t \nabla u(t))\in L^2(\Omega)$ is continuous in $[0,\infty)$.

Multiplying the first equation of (\ref{eq:AP}) by $\partial_t u_k$ and performing integration by parts, it yields
\begin{eqnarray}\label{est1}
&&\frac12 \frac{d}{dt} ||\partial_t u_k(t)||_{L^2(\Omega)}^2 + \frac12 \frac{d}{dt} ||\nabla u_k(t)||_{L^2(\Omega)}^2 + \frac{d}{dt}\int_\Omega F_k(u_k(x,t))\,dxdt\\
&& + \int_\Omega a(x) |\nabla \partial_t u_k(x,t)|^2\,dx +\frac{1}{k} \int_\Omega b(x) |\partial_t u_k(x,t)|^2\,dx=0, \hbox{ for all } t\in [0,\infty),\nonumber
\end{eqnarray}
where
\begin{eqnarray}\label{primitive}
F_k(\lambda) = \int_0^\lambda f_k(s)\,ds.
\end{eqnarray}

Hence, taking (\ref{est1}) into account, we infer
\begin{eqnarray}\label{est2}
E_{u_k}(t) &+& \int_0^t \int_\Omega a(x) |\nabla \partial_t u_k(x,s)|^2\,dxds\\
 &+&\frac{1}{k}\int_0^t\int_\Omega b(x) |\partial_t u_k(x,s)|^2\,dxds = E_{u_k}(0), \hbox{ for all } t\in [0,+\infty) \hbox{ and } k\in \mathbb{N},\nonumber
\end{eqnarray}
where
\begin{eqnarray}
E_{u_k}(t):= \frac12 \int_\Omega  |\partial_t u_k(x,t)|^2 + |\nabla u_k(x,t)|^2\,dx + \int_\Omega F_k(u_k(x,t))\,dx,
\end{eqnarray}
is the energy associated to problem (\ref{eq:AP}).

We observe that from (\ref{trunc func}), the function defined in (\ref{primitive}) is given by
\begin{equation}\label{primitive trunc func}
F_{k}(s):=\begin{cases}
\displaystyle \int_0^sf(\xi)\,d\xi, & |s|\leq k,\\
\displaystyle \int_0^k f(\xi) \,d\xi + f(k)[s-k], & s > k,\\
\displaystyle f(-k)[s+k] + \int_0^{-k}f(\xi)\,d\xi, & s < -k.
\end{cases}
\end{equation}

Since $f$ satisfies the sign condition, it results that $F_k(s) \geq 0$ for all $s\in \mathbb{R}$ and $k\in \mathbb{N}$. In addition, from (\ref{ass on f}) and (\ref{ass on F}), we obtain, respectively, that $|f(s)|\leq c [|s| + |s|^p]$ and $0 \leq F(s) \leq f(s) \,s$ for all $s\in \mathbb{R}$.  Then, we infer that
\begin{eqnarray}\label{bound on Fk}
|F_k(s)| \leq c [|s|^2 + |s|^{p+1}],~\hbox{ for all }s\in \mathbb{R}~\hbox{ and }~k\in\mathbb{N}.
\end{eqnarray}

Consequently,
\begin{eqnarray}\label{bound data L1}
\int_\Omega |F_k(u_{0,k})|\,dx &\leq& c\int_\Omega \left[|u_{0,k}|^2 + |u_{0,k}|^{p+1} \right]\,dx\\
&\lesssim&  ||u_{0,k}||_{H_0^1(\Omega)}.\nonumber
\end{eqnarray}

Assuming that $p\geq 1$ is under conditions (\ref{ass on f'}), we have for every dimension $n\geq 1$ that $H_0^1(\Omega) \hookrightarrow L^{p+1}(\Omega)$, which implies that the RHS of  (\ref{bound data L1}) is bounded.
So, estimates (\ref{est2}) (also called energy identity for the auxiliary problem (\ref{eq:AP}) and (\ref{bound data L1}) and convergence (\ref{conv init data}),  yield a subsequence of $\{u_k\}$, reindexed again by $\{u_k\}$,  such that
\begin{eqnarray}
&&u_k \rightharpoonup u ~\hbox{ weakly * in } L^\infty(0,\infty; H_0^1(\Omega)),\label{conver1}\\
&&\partial_t u_k \rightharpoonup \partial_t u ~\hbox{ weakly * in } L^\infty(0,\infty; L^2(\Omega))\label{conver2},\\
&&  \frac{1}{\sqrt{k}}\sqrt{b(x)}\partial_t u_{k} \rightarrow  0 \hbox{ strongly in }  L_{loc}^{2}(0,\infty;
L^{2}(\Omega)),\label{eq:3.54}\\
&& \sqrt{a(x)} \nabla \partial_t u_{k} \rightharpoonup \sqrt{a(x)} \nabla \partial_t u \hbox{ weakly in }L^{2}(0,\infty; L^{2}(\Omega))~ (\hbox{see Appendix)}.\label{eq:3.56}
\end{eqnarray}

Employing the standard compactness result (see Simon \cite{Simon}) we also deduce that
\begin{eqnarray}\label{conver3}
u_k \rightarrow u~\hbox{ strongly in } L^\infty (0,T; L^{2^{\ast}-\eta}(\Omega)); \hbox{ for all } T>0,
\end{eqnarray}
where $2^{\ast}:= \frac{2n}{n-2}$ and $\eta >0$ is small enough. In addition, from (\ref{conver3}), we obtain
\begin{eqnarray}\label{conver4}
u_k \rightarrow u~\hbox{ a. e. in  } \Omega \times (0,T), \hbox{ for all } T>0.
\end{eqnarray}

On the other hand, from (\ref{ass on f}), (\ref{ass on f'}), (\ref{conver1}) and once $H_0^1(\Omega)\hookrightarrow  L^{p+1}(\Omega)\hookrightarrow  L^{\frac{p+1}{p}}(\Omega)$
the following estimate holds:
\begin{eqnarray}\label{estIII}
\|f_k(u_k)\|_{L^{\frac{p+1}{p}}}^{\frac{p+1}{p}} &=& \int_0^{T} \int_\Omega |f_k(u_k(x,t))|^{\frac{p+1}{p}}\,dxdt \nonumber\\
&\lesssim& \int_0^T\int_\Omega |u_k|^{\frac{p+1}{p}}\,dxdt + \int_{0}^{T}\int_\Omega |u_k|^{p+1}\,dx dt\nonumber\\
&=& \int_0^T \|u_k\|_{L^{\frac{p+1}{p}}(\Omega) }^{\frac{p+1}{p}}\,dt+\int_0^T \|u_k\|_{L^{p+1}(\Omega)}^{p+1}\, dt\nonumber\\
&\lesssim& \int_0^T \|u_k\|_{H_0^1(\Omega)}^{\frac{p+1}{p}}\,dt+\int_0^T \|u_k\|_{H_0^1(\Omega)}^{p+1}\, dt\nonumber\\
&\lesssim& \|u_k\|_{L^\infty(0,T;H_0^1(\Omega))}^{\frac{p+1}{p}}+\|u_k\|_{L^\infty(0,T;H_0^1(\Omega))}^{p+1}\nonumber\\
& \leq& c <+\infty,~\hbox{ for all }t\geq 0.
\end{eqnarray}
It is easy to see that
\begin{equation}\label{estIII.1}
f(u) \in L^\infty(0,\infty;L^{\frac{p+1}{p}}(\Omega)).
\end{equation}
Indeed,
\begin{eqnarray}\label{estIII.2}
\int_\Omega |f(u(x,t))|^{\frac{p+1}{p}}\,dx &\lesssim& \int_\Omega |u(x,t)|^{\frac{p+1}{p}}\,dx + \int_\Omega |u(x,t)|^{p+1}\,dx \nonumber\\
& \lesssim& \|u(\cdot,t)\|_{H_{0}^{1}(\Omega)}^{\frac{p+1}{p}}+ \|u(\cdot,t)\|_{H_{0}^{1}(\Omega)}^{p+1}\nonumber\\
& \lesssim& \|u\|_{L^\infty(0,T;H_{0}^{1}(\Omega))}^{\frac{p+1}{p}} +\|u\|_{L^\infty(0,T;H_{0}^{1}(\Omega))}^{p+1}<+\infty,~\hbox{ for all }t\geq 0.
\end{eqnarray}
From \eqref{estIII.2} and the definition of essential supremum we obtain \eqref{estIII.1}.

In addition, from (\ref{conver4}) and the continuity of the function $f$, we get
\begin{eqnarray}\label{conver5''}
f_k(u_k) \rightarrow f(u) \hbox{ a. e. in  } \Omega \times (0,T), \hbox{ for all } T>0.
\end{eqnarray}

Indeed, the convergence (\ref{conver4}) guarantees the existence of set $Z_T \subset \Omega \times (0,T)$ with $\operatorname{meas}(Z_T)=0$  such that $u_k(x,t) \rightarrow u(x,t)$ for all $(x,t) \in \Omega \times (0,T) \setminus Z_T$ when $k \rightarrow \infty$. Therefore, for all $(x,t) \in \Omega \times (0,T) \setminus Z_T$ there exists a positive constant $L=L(x,t)>0$ verifying $|u_k(x,t)|<L,$ for all $k \in \mathbb{N}$.  Then, using the definition of $f_k$, we obtain that
\begin{eqnarray}\label{boundedness}
\hbox{ if } |u_k(x,t)|<L, \hbox{ for all } k \in \mathbb{N} \hbox{ then }~ f_k(u_k(x,t))=f(u_k(x,t)), \hbox{ for all } k\geq L,
\end{eqnarray}
that is,
\begin{equation}\label{boundednes1}
f_k(u_k(x,t))-f(u_k(x,t)) \rightarrow 0 \hbox{ when } k \rightarrow \infty \hbox{ for all } (x,t) \in \Omega \times (0,T) \setminus Z_T.
\end{equation}
On the other hand, employing the continuity of $f$ it follows that
\begin{equation}\label{boundednes2}
f(u_k(x,t))-f(u(x,t)) \rightarrow 0 \hbox{ when } k \rightarrow \infty \hbox{ for all } (x,t) \in \Omega \times (0,T) \setminus Z_T.
\end{equation}
From \eqref{boundednes1} and \eqref{boundednes2} the convergence (\ref{conver5''}) holds.


\begin{Lemma}[Strauss] Let $\mathcal{O}$ be an open and bounded subset of $\mathbb{R}^N$, $N\geq 1$, $1<q<+\infty$ and $\{u_n\}_{n\in \mathbb{N}}$ a sequence which is bounded in $L^q(\mathcal{O})$. If $u_n \rightarrow u$ a.e. in $\mathcal{O}$, then $u\in L^q(\mathcal{O})$ and $u_n \rightharpoonup u$ weakly in $L^q(\mathcal{O})$. In addition, if $1\leq r <q$ we also have $u_n \rightarrow u$ strongly in $L^r(\mathcal{O})$.
\end{Lemma}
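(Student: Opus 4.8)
The plan is to derive everything from Fatou's lemma together with Vitali's convergence theorem, exploiting the finiteness of $|\mathcal{O}|$ to convert boundedness in $L^q$ into uniform integrability of suitable powers. First I would establish $u\in L^q(\mathcal{O})$. Since $u_n\to u$ a.e., also $|u_n|^q\to|u|^q$ a.e., and Fatou's lemma gives
$$\int_{\mathcal{O}} |u|^q \, dx \leq \liminf_{n\to\infty} \int_{\mathcal{O}} |u_n|^q \, dx \leq M < \infty,$$
where $M$ bounds $\|u_n\|_{L^q}^q$. Hence $u\in L^q(\mathcal{O})$, and in particular $u_n-u$ is bounded in $L^q$ by some constant $C$.

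For the weak convergence I would fix an arbitrary $\phi\in L^{q'}(\mathcal{O})$, with $q'=q/(q-1)$, and show $\int_{\mathcal{O}}(u_n-u)\phi\,dx\to 0$. The functions $(u_n-u)\phi$ converge to $0$ a.e.; to pass to the limit under the integral I would verify uniform integrability: for every measurable $E\subset\mathcal{O}$, H\"older's inequality yields
$$\int_E |(u_n - u)\phi|\, dx \leq \|u_n - u\|_{L^q}\, \|\phi \mathbf{1}_E\|_{L^{q'}} \leq C\, \|\phi \mathbf{1}_E\|_{L^{q'}},$$
and $\|\phi \mathbf{1}_E\|_{L^{q'}}\to 0$ as $|E|\to 0$ by absolute continuity of the integral (here $q'<\infty$ precisely because $q>1$). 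Since $\mathcal{O}$ has finite measure, Vitali's convergence theorem applies and gives $\int_{\mathcal{O}}(u_n-u)\phi\,dx\to 0$. As $\phi$ was arbitrary in the dual space, this is exactly $u_n\rightharpoonup u$ weakly in $L^q(\mathcal{O})$.

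Finally, for the strong convergence in $L^r$ with $1\leq r<q$, I would apply the same Vitali mechanism to $|u_n-u|^r\to 0$ a.e. The uniform integrability now comes from the strict inequality $r<q$: for every measurable $E\subset\mathcal{O}$, H\"older with exponents $q/r$ and its conjugate gives
$$\int_E |u_n - u|^r\, dx \leq \left(\int_E |u_n - u|^q\, dx\right)^{r/q} |E|^{1 - r/q} \leq C^{r}\, |E|^{1 - r/q},$$
which tends to $0$ uniformly in $n$ as $|E|\to 0$ since $1-r/q>0$. Vitali's theorem then gives $\int_{\mathcal{O}}|u_n-u|^r\,dx\to 0$, that is, $u_n\to u$ strongly in $L^r(\mathcal{O})$.

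The main obstacle, and really the only non-routine point, is securing the uniform integrability that licenses the passage to the limit in Vitali's theorem; this is where the two hypotheses enter essentially. The condition $q>1$ makes $q'<\infty$, so that $\|\phi\mathbf{1}_E\|_{L^{q'}}$ enjoys the absolute continuity needed for the weak statement, while the strict inequality $r<q$ produces the positive exponent $1-r/q$ needed for the strong statement. The finiteness of $|\mathcal{O}|$ is what makes Vitali's theorem available and keeps all the constants finite.
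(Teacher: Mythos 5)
Your argument is correct, but it does not follow the paper's route: the paper gives no proof at all, simply citing Brezis (Exercise 4.16) and Strauss's original article. The standard argument behind those references extracts a weakly convergent subsequence from the bounded sequence using the reflexivity of $L^q$ for $1<q<\infty$, identifies the weak limit with the a.e.\ limit $u$ (typically via Egorov's theorem or Mazur's lemma), and then upgrades to a statement about the whole sequence by uniqueness of the limit; the strong $L^r$ convergence is likewise usually obtained from Egorov plus H\"older on the small exceptional set. Your proof replaces all of this with a single mechanism: Fatou's lemma to place $u$ in $L^q$, and then Vitali's convergence theorem applied twice, once to $(u_n-u)\phi$ for a fixed $\phi\in L^{q'}$ (with uniform integrability supplied by H\"older and the absolute continuity of $\int|\phi|^{q'}$, which is where $q>1$ enters) and once to $|u_n-u|^r$ (with uniform integrability supplied by the positive exponent $1-r/q$). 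This buys a fully self-contained proof that treats the whole sequence directly, with no appeal to reflexivity, weak compactness, or a subsequence-plus-uniqueness step; the only structural inputs are the finiteness of $|\mathcal{O}|$ and H\"older's inequality, and both hypotheses $q>1$ and $r<q$ are used exactly where you say they are. The argument is complete as written.
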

\begin{proof}
	See  \cite{Brezis}, Exercise 4.16 or \cite{Strauss}.
\end{proof}

Gathering together \eqref{estIII}, \eqref{estIII.1} and Lions' Lemma, we deduce that \begin{eqnarray}\label{weak conv fk}
f_k(u_k) \rightharpoonup f(u)~\hbox{ weakly in } L^{\frac{p+1}{p}}(\Omega \times (0,T)).
\end{eqnarray}

Going back to problem (\ref{eq:AP}), multiplying by $ \varphi \, \theta $, where $ \varphi \in C_0^\infty(\Omega), \theta \in C_0^\infty(0,T)$ and performing integration by parts, we obtain
 \begin{eqnarray}\label{form var}
&&-\int_0^T \theta'(t) \int_\Omega \partial_t u_{k}(x,t)\, \varphi(x) \,dx dt + \int_0^T \theta(t) \int_\Omega \nabla u_{k}(x,t)\cdot \nabla \varphi(x) \,dxdt\\
&& +\int_0^T \theta(t) \int_\Omega f_k(u_k(x,t))\,\varphi(x) \,dxdt +\int_0^T \theta(t) \int_\Omega a(x)  \nabla \partial_t u_k(x,t)\cdot \nabla \varphi(x)\,dxdt\nonumber\\
&&+\frac{1}{k}\int_0^T \theta(t) \int_\Omega b(x)  \partial_t u_k(x,t)\, \varphi(x)\,dxdt\nonumber = 0.
 \end{eqnarray}

Passing to the limit in (\ref{form var}) and observing convergences (\ref{conver1})-(\ref{eq:3.56}) and (\ref{weak conv fk}), we get
 \begin{eqnarray}\label{limit}
&&-\int_0^T \theta'(t) \int_\Omega \partial_t u(x,t)\, \varphi(x) \,dx dt + \int_0^T \theta(t) \int_\Omega \nabla u(x,t)\cdot \nabla \varphi(x) \,dxdt\\
&& +\int_0^T \theta(t) \int_\Omega f(u(x,t))\,\varphi(x) \,dxdt + \int_0^T \theta(t) \int_\Omega a(x)  \nabla \partial_t  u(x,t) \cdot \nabla \varphi(x)\,dxdt\nonumber = 0,
 \end{eqnarray}
 for all $\varphi \in C_0^\infty(\Omega)$ and $\theta \in C_0^\infty(0,T)$.  We conclude that
 \begin{eqnarray}\label{dist sol}
 \partial_t^2 u - \Delta u + f(u) - \operatorname{div}(a(x)\nabla \partial_t u) = 0 ~\hbox{ in }\mathcal{D}'(\Omega \times (0,T)),
 \end{eqnarray}
 and since
 \begin{eqnarray*}
&& a(\cdot) \partial_t u \in L^\infty(0,T; L^2(\Omega)), ~\Delta u \in L^\infty(0,T; H^{-1}(\Omega)), \\
&&\operatorname{div}( a(x)\nabla  \partial_t u) \in L^2(0,T; H^{-1}(\Omega)) ~\hbox{and}~ f(u) \in L^\infty(0,T; L^{\frac{p+1}{p}}(\Omega)),
 \end{eqnarray*}
we deduce that $\partial_t^2 u \in  L^2(0,T; H^{-1}(\Omega))$ and
\begin{eqnarray}\label{weak solution'}
 \partial_t^2 u - \Delta u + f(u) - \hbox{div} \left[a(x) \partial_t u\right] = 0 ~\hbox{ in } L^2(0,T;  H^{-1}(\Omega)).
\end{eqnarray}

Applying Lemma 8.1 of Lions-Magenes \cite{Lions-Magenes}, we deduce that
\begin{equation}\label{weak solution}
u\in C_w(0,T;H_0^1(\Omega)) \hbox{ and } \partial_t u \in C_w(0,T; L^2(\Omega)),
\end{equation}
where $C_w(0,T;Y)=$ space of functions $f\in L^\infty(0,T;Y)$ whose mappings $[0,T] \mapsto Y$ are weakly continuous, that is, $t \mapsto \langle y', f(t) \rangle_{Y',Y}$ is continuous in $[0,T]$ for all $y' \in Y'$, dual of $Y$.

\medskip

Our first result reads as follows:
 \begin{Theorem}\label{theo 1}
 Assume that $a\in L^\infty(\Omega)\cap C^0(\overline{\omega})$  and $f\in C^1(\mathbb{R})$ satisfies $f(s)s\geq 0$ for all $s\in \mathbb{R}$.  In addition, suppose that assumptions (\ref{ass on f}), (\ref{ass on f'}) and (\ref{ass on F}) are in place.  Then, problem (\ref{eq:*}) has at least a global solution in the class
 {\small
 $$u\in C_w(0,T;H_0^1(\Omega)),~\partial_t u \in C_w(0,T; L^2(\Omega)),~\partial_t^2 u \in L^2(0,T; H^{-1}(\Omega)),$$}
 provided that $\{u_0,u_1\}\in H_0^1(\Omega) \times L^2(\Omega)$. Furthermore, assuming that $1\leq p \leq \frac{n}{n-2},n\geq 3$ or $p\geq 1, n=1,2$, we have the uniqueness of solution.
 \end{Theorem}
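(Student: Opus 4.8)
The existence half of the statement is, in fact, already assembled in the development preceding the theorem, so my plan is simply to collect it. The energy identity (\ref{est2}) together with the bound (\ref{bound data L1}) on the truncated potential furnishes, uniformly in $k$, the a priori estimates yielding the weak-$*$ limits (\ref{conver1})--(\ref{eq:3.56}); the weak convergence (\ref{weak conv fk}) of the nonlinear term, obtained via the lemma of Strauss, then lets me pass to the limit in the variational identity and arrive at (\ref{weak solution'}). Reading off the regularity of each term gives $\partial_t^2 u\in L^2(0,T;H^{-1}(\Omega))$, and Lemma 8.1 of Lions--Magenes produces the weak continuity (\ref{weak solution}), so the prescribed solution class follows. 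The initial data $u(0)=u_0$, $\partial_t u(0)=u_1$ are then recovered by combining the weak continuity (\ref{weak solution}) with the convergence (\ref{conv init data}) of the approximate data. This disposes of global existence for the full subcritical range (\ref{ass on f'}).

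For uniqueness I would restrict to the smaller range $1\le p\le \frac{n}{n-2}$ ($n\ge 3$) and argue by an energy estimate on the difference. Let $u^1,u^2$ be two solutions in the stated class sharing the same data, and set $w:=u^1-u^2$. Subtracting the two copies of (\ref{weak solution'}) gives, in $L^2(0,T;H^{-1}(\Omega))$,
\begin{equation*}
\partial_t^2 w-\Delta w-\operatorname{div}\!\big(a(x)\nabla \partial_t w\big)=-\big(f(u^1)-f(u^2)\big),\qquad w(0)=\partial_t w(0)=0.
\end{equation*}
Formally pairing with $\partial_t w$ and integrating over $\Omega$ yields
\begin{equation*}
\frac{1}{2}\frac{d}{dt}\Big(\|\partial_t w\|_{L^2}^2+\|\nabla w\|_{L^2}^2\Big)+\int_\Omega a(x)\,|\nabla\partial_t w|^2\,dx=-\int_\Omega\big(f(u^1)-f(u^2)\big)\,\partial_t w\,dx .
\end{equation*}
The Kelvin--Voigt term on the left is nonnegative and is simply discarded; everything hinges on controlling the right-hand side.

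Here the growth restriction enters decisively. By (\ref{ass on f}),
\begin{equation*}
\Big|\int_\Omega\big(f(u^1)-f(u^2)\big)\partial_t w\,dx\Big|\le c\int_\Omega\big(1+|u^1|^{p-1}+|u^2|^{p-1}\big)|w|\,|\partial_t w|\,dx,
\end{equation*}
and I would estimate the typical term by H\"older with the conjugate exponents $(n,2^{*},2)$, placing $|u^i|^{p-1}$ in $L^n$, $w$ in $L^{2^{*}}$ and $\partial_t w$ in $L^2$. This is admissible \emph{precisely} because $n(p-1)\le 2^{*}=\frac{2n}{n-2}$, i.e. $p\le \frac{n}{n-2}$, so that $\||u^i|^{p-1}\|_{L^n}=\|u^i\|_{L^{n(p-1)}}^{p-1}\lesssim \|u^i\|_{H_0^1}^{p-1}$; since $u^1,u^2\in L^\infty(0,T;H_0^1(\Omega))$ these factors are bounded by a constant $M$. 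Using $\|w\|_{L^{2^{*}}}\lesssim\|\nabla w\|_{L^2}$, and Poincar\'e for the lower-order piece coming from the constant $1$, the right-hand side is dominated by $C\big(\|\partial_t w\|_{L^2}^2+\|\nabla w\|_{L^2}^2\big)$. Gronwall's inequality with vanishing initial energy then forces $w\equiv 0$, which is uniqueness.

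The only genuine difficulty is that the solutions live merely in the energy class, so the multiplication by $\partial_t w$ and the resulting energy identity are not literally justified: $\partial_t^2 w$ lies in $H^{-1}$ while $\partial_t w$ lies only in $L^2$, and $a$ degenerates on $A$, so $\nabla\partial_t w$ is not globally $L^2$. I expect this to be the main obstacle, and I would resolve it by the standard device for weak solutions of second-order hyperbolic equations: either mollify $w$ in time, perform the estimate on the regularizations and pass to the limit, or use the Ladyzhenskaya--Lions integrated test function $\phi(t)=\int_t^{s}w(\sigma)\,d\sigma$ (extended by $0$ for $t>s$), which is an admissible test in (\ref{weak solution'}) and reproduces the same energy relation after integrating by parts in $t$. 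Either route turns the formal computation above into a rigorous one and completes the proof.
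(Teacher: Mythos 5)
Your proposal is correct and takes essentially the same route as the paper: existence is obtained exactly as you describe, by collecting the a priori estimates, the limit passage \eqref{conver1}--\eqref{weak conv fk}, and the Lions--Magenes lemma already carried out before the theorem statement, while the paper's uniqueness proof is simply a citation of Lions (Th\'eor\`eme 1.2), which is precisely the energy argument you spell out — difference of solutions, pairing with $\partial_t w$, H\"older/Sobolev control of $f(u^1)-f(u^2)$ under $p\le \tfrac{n}{n-2}$, Gronwall, and the standard regularization or integrated-test-function device to justify the formal computation. Your version merely makes explicit the details the paper delegates to the reference.
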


\medskip
 \begin{proof}
The uniqueness of solution as well as to prove that $u(0)=u_0$ and $\partial_t u(0)=u_1$ follow the same ideas used in Lions \cite{Lions1} (Theorem 1.2).
\end{proof}

\medskip

\subsection{Recovering the regularity in time for the range $1\leq p < \frac{n}{n-2},$ $n\geq 3$. }

When $p\geq 1,$ $n=1,2$, the result is trivially verified and it will be omitted.

The goal of this subsection is to prove that if $1\leq p < \frac{n}{n-2},$ $n\geq 3$, the related solutions to problem (\ref{eq:*}) are in the class
$$u\in C^0([0,T];H_0^1(\Omega)),~\partial_t u \in C^0([0,T]; L^2(\Omega))$$
and, in addition, one has
\begin{eqnarray*}
\{u_k, \partial_t u_k\} \rightarrow \{u,\partial_tu\} \hbox{ in } C^0([0,T];H_0^1(\Omega)) \times C^0([0,T]; L^2(\Omega)).
\end{eqnarray*}

To prove the above statements, we need to prove that
\begin{eqnarray}\label{main conv}
f_k(u_k) \rightarrow f(u) ~\hbox{ strongly in }L^2(\Omega \times (0,T)).
\end{eqnarray}

In fact, first we observe that
\begin{eqnarray}\label{I}
&&\int_0^T \int_\Omega |f_k(u_k) - f(u)|^2\,dxdt\\
&& \lesssim \int_0^T \int_\Omega |f_k(u_k) - f(u_k)|^2\,dxdt + \int_0^T \int_\Omega |f(u_k) - f(u)|^2\,dxdt.\nonumber
\end{eqnarray}

In view of (\ref{ass on f}) one has
\begin{eqnarray*}
\int_\Omega |f(u_k) - f(u)|^2 \,dx &\lesssim& \int_\Omega |u_k - u|^2\,dx + \int_\Omega |u_k|^{2(p-1)} |u_k -u|^2\,dx \nonumber \\
&&+ \int_\Omega |u|^{2(p-1)} |u_k -u|^2\,dx \nonumber \\
&=& I_{1,k}+I_{2,k}+I_{3,k}
\end{eqnarray*}

We observe that since $\frac{p-1}{p} + \frac{1}{p}=1$, H\"older inequality yields
\begin{eqnarray*}
I_{2,k} \leq \left(\int_\Omega |u_k|^{2p} \right)^{\frac{p-1}{p}}\left( \int_\Omega |u_k - u|^{2p}\right)^{\frac{1}{p}}.
\end{eqnarray*}

Choosing $p< \frac{n}{n-2}$ it implies that $2p < \frac{2n}{n-2}=2^{\ast}$ and, consequently, from (\ref{conver1}) and (\ref{conver3}) we deduce that $I_{2,k} \rightarrow 0$ as $k\rightarrow +\infty$. Analogously, we also deduce that $I_{3,k} \rightarrow 0$ as $k\rightarrow +\infty$.  We trivially obtain that $I_{1,k} \rightarrow 0$ as $k\rightarrow +\infty$.  Then,
\begin{eqnarray}\label{II}
\int_0^T \int_\Omega |f(u_k) - f(u)|^2\,dxdt \rightarrow 0 ~\hbox{ as } k \rightarrow \infty.
\end{eqnarray}

From (\ref{I}) it remains to prove that
\begin{eqnarray}\label{III}
\int_0^T \int_\Omega |f_k(u_k) - f(u_k)|^2 \,dxdt \rightarrow 0 ~\hbox{ as } k \rightarrow \infty.
\end{eqnarray}

Let us consider, initially, $t\in [0,T]$ fixed and define $$\Omega_k^t:=\{x\in \Omega: |u_k(x,t)| >k\}.$$

Observing that
\begin{eqnarray*}
f_k(u_k) - f(u_k) =0, ~\hbox{ if } |u_k(x,t)|\leq k,
\end{eqnarray*}
we have

\begin{eqnarray}\label{IV}
&&\int_\Omega |f_k(u_k) - f(u_k)|^2\,dx = \int_{\Omega_k^t} |f_k(u_k) - f(u_k)|^2\,dx\\
&&\lesssim \left[\int_{\Omega_k^t} |f(u_k)|^2\, dx + \int_{\Omega_k^t} |f(-k)|^2\, dx + \int_{\Omega_k^t} |f(k)|^2\, dx  \right]\nonumber\\
&& \lesssim\left[ \int_{\Omega_k^t} [|u_k|^2 + |u_k|^{2p} ]\, dx +  \int_{\Omega_k^t} [|k|^2 + |k|^{2p} ]\, dx\right]\nonumber\\
&& \lesssim\left[ \int_{\Omega_k^t} |u_k|^{2p} \, dx +  \int_{\Omega_k^t} |k|^{2p}\, dx\right]\nonumber\\
&& \lesssim \int_{\Omega_k^t} |u_k|^{2p} \,dx.\nonumber
\end{eqnarray}

Before analyzing the term on the RHS of (\ref{IV}) we note that since $H_0^1(\Omega)\hookrightarrow L^{\frac{2n-\frac12}{n-2}}(\Omega)$ and the convergence (\ref{conv init data}) are in place, we obtain
\begin{eqnarray} \label{V}
\left( \int_{\Omega_k^t} k^{\frac{2n-\frac{1}{2}}{n-2}}\,dx \right) &\lesssim& \left( \int_{\Omega_k^t}
|u_k|^{\frac{2n-\frac{1}{2}}{n-2}}\,dx \right)\\
&=& ||u_k(t)||_{L^{\frac{2n-\frac12}{n-2}}(\Omega_k^t)}^{\frac{2n-\frac12}{n-2}} \lesssim ||u_k(t)||_{H_0^1(\Omega)}^{\frac{2n-\frac12}{n-2}} \lesssim [E_{u_k}(0)]^{\frac{2n-\frac12}{n-2}} \leq C,\nonumber
\end{eqnarray}
for all $t\in [0,T]$, where $C$ is a positive constant which does not depend on $k$ and $t$.
Thus, it yields
\begin{eqnarray}\label{VI}
\operatorname{meas}(\Omega_k^t)  \lesssim k^{\frac{-2n+\frac12}{n-2}}, \hbox{ for all } t\in [0,T].
\end{eqnarray}

\medskip

Let $\beta:= \frac{2n}{(2p)(n-2)}$, for $n\geq 3$. Observe that we have the following inequalities:
$$p< \frac{n}{n-2} \Leftrightarrow  2n> (2p)(n-2)\Leftrightarrow 2p< \frac{2n}{n-2}=2^{\ast} \Leftrightarrow  \beta > 1. $$

Setting $\alpha>0$ such that $\frac{1}{\alpha} + \frac{1}{\beta}=1$, we deduce that $\alpha=\frac{2n}{2n-(2p)(n-2)}$ and using H\"older inequality we get
\begin{eqnarray}\label{VII'}
\int_{\Omega_k^t} |u_k|^{2p}\,dx &\leq& \left( \operatorname{meas}(\Omega_k)\right)^{\frac{2n-(2p)(n-2)}{2n}} \left(\int_{\Omega_k^t} |u_k|^{\frac{2n}{n-2}}\right)^{\frac{(2p)(n-2)}{2n}}\\
&=& \left( \operatorname{meas}(\Omega_k)\right)^{\frac{2n-(2p)(n-2)}{2n}} ||u_k(t)||_{L^{\frac{2n}{n-2}}(\Omega)}^{2p}.\nonumber
\end{eqnarray}

Thus, from (\ref{VI}) and (\ref{VII'}) we conclude
\begin{eqnarray}\label{VIII'}
\int_0^T\int_{\Omega_k^t} |u_k|^{2p}\,dx &\leq& k^{\left(\frac{-2n+\frac12}{n-2}\right)\left(\frac{2n-(2p)(n-2)}{2n}\right)} \int_0^T||u_k(t)||_{L^{\frac{2n}{n-2}}(\Omega)}^{2p}\,dt\\
&\lesssim& k^{\left(\frac{-2n+\frac12}{n-2}\right)\left(\frac{2n-(2p)(n-2)}{2n}\right)}\int_0^T ||u_k(t)||_{H_0^1(\Omega)}^{2p}\,dt\nonumber\\
&\lesssim& k^{\left(\frac{-2n+\frac12}{n-2}\right)\left(\frac{2n-(2p)(n-2)}{2n}\right)}[ E_{u_k}(0)]^p,\nonumber
\end{eqnarray}

Employing the fact that $E_{u_k}(0) \leq C$ for all $k\in \mathbb{N}$ and $\left(\frac{-2n+\frac12}{n-2}\right)\left(\frac{2n-(2p)(n-2)}{2n}\right)< 0$, in light of inequality (\ref{VIII'}) , we prove that

\begin{eqnarray}\label{VIII'.convergente}
\int_0^T\int_{\Omega_k^t} |u_k|^{2p}\,dx \rightarrow 0,\, \hbox{ as } k\rightarrow +\infty.
\end{eqnarray}

Gathering (\ref{IV}) and (\ref{VIII'.convergente}) together, we conclude (\ref{III}) which proves (\ref{main conv}).

\medskip

Now, we define the sequence $z_{\mu,\sigma}=u_{\mu}-u_{\sigma}$, $\mu,\sigma\in
\mathbb{N}$, and from (\ref{eq:AP}) we deduce
\begin{eqnarray}\label{eq:3.59}
&&\frac12 \frac{d}{dt} \left\{||\partial_t z_{\mu,\sigma}(t)||_{L^2(\Omega)}^2 +||\nabla
z_{\mu,\sigma}(t)||_{L^2(\Omega)}^2 \right\}
+\int_{\Omega} a(x)|\partial_t \nabla z_{\mu,\sigma}|^2\,dx \\
&&+ \frac1{\mu} \int_{\Omega}b(x) |u_{\mu}'|^2\,dx - \frac1{\mu}
\int_{\Omega} b(x) u_{\mu}' u_{\sigma}' \,dx - \frac1{\sigma}
\int_{\Omega} b(x)  u_{\sigma}' u_{\mu}' \,dx +  \frac1{\sigma}
\int_{\Omega} b(x) |u_{\sigma}'|^2dx\nonumber\\
&&=  \int_{\Omega}\left(f_{\mu}(u_{\mu})-
f_{\sigma}(u_{\sigma})\right)(\partial_t u_{\mu}-\partial_t u_{\sigma}) \,dx.
\nonumber
\end{eqnarray}

Integrating (\ref{eq:3.59}) over $(0,t)$, we obtain
\begin{eqnarray}\label{eq:3.60}
&&\frac12  \left\{||\partial_t z_{\mu,\sigma}(t)||_{L^2(\Omega)}^2 +||\nabla
z_{\mu,\sigma}(t)||_{L^2(\Omega)}^2 \right\}
+\int_0^t\int_{\Omega} a(x)|\nabla\partial_t z_{\mu,\sigma}|^2\,dx ds\\
&& \lesssim \left[\frac1{\mu}+\frac1{\sigma}\right]\int_0^t\int_{\Omega}
b(x) |u_{\mu}'|^2\,dx
+ \left[\frac1{\mu}+\frac1{\sigma}\right]\int_0^t\int_{\Omega} b(x) |u_{\sigma}'|^2\,dx\nonumber\\
&&+  \frac12\left\{||u_{1,\mu} - u_{1,\sigma}||_{L^2(\Omega)}^2 + ||\nabla u_{0,\mu}
- \nabla u_{0,\sigma}||_{L^2(\Omega)}^2\right\}\nonumber\\
&&+ \int_0^t \int_{\Omega}\left(f_{\mu}(u_{\mu})-
f_{\sigma}(u_{\sigma})\right)(\partial_t u_{\mu}-\partial_t u_{\sigma}) \,dxds.
\nonumber
\end{eqnarray}

Since $b\in C^0(\overline{\Omega})$, the convergences (\ref{conv init data}), (\ref{conver2}) and (\ref{main conv}) imply that the terms on the RHS of  the  (\ref{eq:3.60}) converges to zero
as $\mu, \sigma \rightarrow +\infty$.  Thus, we deduce that
\begin{eqnarray}\label{Cauchy conv}\quad
&&u_{\mu} \rightarrow u \hbox { in }
C^0([0,T];H_{0}^1(\Omega)) \cap C^1 ([0,T]; L^2(\Omega)),\\
&&\lim_{\mu \rightarrow +\infty}
\int_0^T\int_{\Omega} a(x) |\nabla \partial_t u_{\mu}|^2
dx\, ds = \int_0^T\int_{\Omega} a(x) |\nabla \partial_t u|^2
dx\, ds,~(\hbox{\small see the Appendix}),\label{damping conv'}
\end{eqnarray}
for all $T>0$.

\medskip

\subsection{Estimating $F_k(u_k)$ }
\medskip

Inequality (\ref{bound on Fk}) gives $$|F_k(s)| \leq c [|s|^2 + |s|^{p+1}],$$ for all $s\in \mathbb{R}$ and $k\in \mathbb{N}$.

Since $1\leq p < \frac{n}{n-2}$ if $n\geq 3$ and $ \frac{n}{n-2} <  \frac{n+2}{n-2}$ we obtain $2\leq p+1 < \frac{2n}{n-2}=2^{\ast}$. Consequently, there exists $\varepsilon >0$ such that $p+1+\varepsilon=2^{\ast}$.  Then, $H_0^1(\Omega) \hookrightarrow L^{p+1+\varepsilon}(\Omega)$ and, consequently,
\begin{eqnarray}\label{bound data Lp'}
\int_\Omega |F_k(u_{0,k})|^\frac{p+1+\varepsilon}{p+1}\,dx &\leq& c\int_\Omega |u_{0,k}|^{\frac{2(p+1+\varepsilon)}{p+1}} + |u_{0,k}|^{p+1+\varepsilon} \,dx\\
&\lesssim&  ||u_{0,k}||_{H_0^1(\Omega)}^{p+1+\varepsilon} \leq C.\nonumber
\end{eqnarray}
Analogously,
\begin{eqnarray}\label{bound on Fk'}
\int_\Omega |F_k(u_{k}(x,t_0))|^\frac{p+1+\varepsilon}{p+1}\,dx
\lesssim ||u_{k}(\cdot,t_0)||_{H_0^1(\Omega)}^{p+1+\varepsilon} \leq C E_{u_k}(0)^{p+1+\varepsilon},
\end{eqnarray}
for all $t_0\in [0,T]$.  The boundedness of $E_{u_k}(0)$ implies that there exists $\chi \in L^{\frac{2^{\ast}}{p+1}}(\Omega )$ verifying the following convergence:
\begin{eqnarray}\label{weak conv of Fk}
F_k(u_k(\cdot,t_0))\rightharpoonup \chi ~\hbox{ weakly in }L^{\frac{2^{\ast}}{p+1}}(\Omega ),~\hbox{ as } k \rightarrow +\infty.
\end{eqnarray}



In what follows we are going to prove that $\chi=F(u(\cdot,t_0))$. Indeed, from \eqref{Cauchy conv} we obtain $u_k(\cdot,t_0) \rightarrow u(\cdot,t_0)$ strongly in $L^2(\Omega)$.  Thus,
\begin{equation}\label{victor1}
u_k(x,t_0) \rightarrow u(x,t_0) \hbox{ a. e. in } \Omega.
\end{equation}

Note that,
\begin{eqnarray}\label{victor3}
&&|F_k(u_k(x,t_0))-F(u(x,t_0))| \\
&&\leq |F_k(u_k(x,t_0))-F(u_k(x,t_0))|+|F(u_k(x,t_0))-F(u(x,t_0))|.\nonumber
\end{eqnarray}

The convergence  (\ref{victor1}) and the continuity of $F$ imply
\begin{eqnarray}\label{victor4}
F(u_k(x,t_0)) \rightarrow F(u(x,t_0)) \hbox{ a. e. in } \Omega.
\end{eqnarray}

In light of inequality (\ref{victor3}), to prove that
\begin{eqnarray}\label{conv a.e. F_k}
F_k(u_k(x,t_0)) \rightarrow F(u(x,t_0)) \hbox{ a. e. in } \Omega.
\end{eqnarray}
it remains to prove that
\begin{eqnarray*}
F_k(u_k(x,t_0)) - F(u_k(x,t_0)) \rightarrow 0 \hbox{ a. e. in } \Omega,
\end{eqnarray*}

In fact, from (\ref{boundedness}), there exists a positive constant $L=L(x,t)>0$ such that \begin{eqnarray}\label{victor5}
|F_k(u_k(x,t_0))-F(u_k(x,t_0))| &=& \left|\int_{0}^{u_k(x,t_0)} f_k(s)ds-\int_{0}^{u_k(x,t_0)} f(s)ds\right|\nonumber\\
&\leq& \int_{-L}^{L} |f_k(s)-f(s)|ds =0,~\hbox{ if }k\geq L.
\end{eqnarray}

Therefore, combining \eqref{victor3}, \eqref{victor4} and \eqref{victor5}, we obtain (\ref{conv a.e. F_k}). Thus, from (\ref{bound on Fk'}) and Lions Lemma we deduce that
\begin{eqnarray}\label{main weak conv}
F_k(u_k(\cdot,t_0))\rightharpoonup F(u(\cdot,t_0)) ~\hbox{ weakly in }L^{\frac{2^{\ast}}{p+1}}(\Omega), \hbox{ as } k \rightarrow +\infty,
\end{eqnarray}
proving that $\chi=F(u(\cdot,t_0))$.

In addition, employing Strauss Lemma we also deduce that
\begin{eqnarray}\label{main strong conv}
F_k(u_k(\cdot, t_0))\rightarrow F(u(\cdot,t_0)) ~\hbox{ strongly in }L^r(\Omega),~\hbox{ as } k \rightarrow +\infty,
\end{eqnarray}
 for all $1\leq r < \frac{2^{\ast}}{p+1}$ and $t \in [0,T]$.

\medskip

Now we are in a position to establish the following result:
\begin{Theorem}\label{theo 2}
 Assume that $a\in L^\infty(\Omega)\cap C^0(\overline{\omega})$ is a nonnegative function and $f\in C^1(\mathbb{R})$ satisfies $f(s)s\geq 0$ for all $s\in \mathbb{R}$.  In addition, suppose that $f$ verifies assumption (\ref{ass on f}) with $1\leq p < \frac{n}{n-2},n\geq 3$ and $p\geq 1, n=1,2$ and assumption (\ref{ass on F}).  Then, given $\{u_0,u_1\}\in H_0^1(\Omega) \times L^2(\Omega)$ problem (\ref{eq:*}) has an unique global solution in the class
 {\small
 $$u\in C^0([0,T];H_0^1(\Omega)),~\partial_t u \in C^0([0,T]; L^2(\Omega)),~\partial_t^2 u \in L^2(0,T; H^{-1}(\Omega) ).$$}

In addition, the energy identity is verified
\begin{eqnarray}\label{energyidentity}\qquad
E_{u}(t):= \frac12\int_\Omega |\partial_t u(x,t)|^2 + |\nabla u(x,t)|^2\,dxdt + \int_\Omega F(u(x,t))\,dxdt.
\end{eqnarray}
 \end{Theorem}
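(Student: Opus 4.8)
The plan is to obtain the existence and uniqueness part directly from Theorem \ref{theo 1}, whose hypotheses are contained in those assumed here (the strict inequality $p<\frac{n}{n-2}$ lies inside the uniqueness range $1\le p\le \frac{n}{n-2}$ of that theorem), and then to spend the real effort on two points: upgrading the weak-continuity class of Theorem \ref{theo 1} to strong continuity, and deriving the energy identity. Both will be read off from the strong convergences already established for the approximate solutions $u_k$ of problem \eqref{eq:AP}.

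For the \textbf{strong regularity in time}, the key improvement over Theorem \ref{theo 1} is the convergence \eqref{Cauchy conv}, which asserts that $u_k\to u$ in $C^0([0,T];H_0^1(\Omega))\cap C^1([0,T];L^2(\Omega))$. Since each regular solution $u_k$ is continuous in these strong topologies and the convergence is uniform in $t$, the limit $u$ inherits strong continuity, so that $u\in C^0([0,T];H_0^1(\Omega))$ and $\partial_t u\in C^0([0,T];L^2(\Omega))$, while $\partial_t^2 u\in L^2(0,T;H^{-1}(\Omega))$ is already furnished by \eqref{weak solution'}.

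For the \textbf{energy identity} I would pass to the limit, as $k\to\infty$, in the identity \eqref{est2} for the auxiliary problem, handling each term separately. By \eqref{Cauchy conv} the kinetic and gradient parts of $E_{u_k}(t)$ converge to those of $E_u(t)$ pointwise in $t$, and the nonlinear part $\int_\Omega F_k(u_k(\cdot,t))\,dx$ converges to $\int_\Omega F(u(\cdot,t))\,dx$ by the strong convergence \eqref{main strong conv} taken with $r=1$; the same reasoning at $t=0$, combined with \eqref{conv init data}, gives $E_{u_k}(0)\to E_u(0)$. The frictional term $\frac1k\int_0^t\int_\Omega b(x)|\partial_t u_k|^2\,dx\,ds$ vanishes in the limit, since $\partial_t u_k$ is bounded in $L^2$ uniformly in $k$ by the energy estimate while the prefactor is $1/k$. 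The viscoelastic dissipation passes to the limit by \eqref{damping conv'}, namely
\[
\int_0^t\int_\Omega a(x)|\nabla\partial_t u_k|^2\,dx\,ds \longrightarrow \int_0^t\int_\Omega a(x)|\nabla\partial_t u|^2\,dx\,ds .
\]
Collecting these limits in \eqref{est2} yields
\[
E_u(t)+\int_0^t\int_\Omega a(x)|\nabla\partial_t u(x,s)|^2\,dx\,ds = E_u(0),
\]
which is the desired energy identity.

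The \textbf{main obstacle} is the convergence of the viscoelastic dissipation \eqref{damping conv'}: a priori only the weak convergence $\sqrt{a}\,\nabla\partial_t u_k\rightharpoonup\sqrt{a}\,\nabla\partial_t u$ is available (see \eqref{eq:3.56}), so passing to the limit in the quadratic functional requires ruling out loss of mass, which is exactly what the Appendix argument supplies. The second delicate ingredient is the limit of the nonlinear potential $\int_\Omega F_k(u_k)\,dx$; this is where the strict subcriticality $p<\frac{n}{n-2}$ enters, through the measure estimate \eqref{VI} that feeds into \eqref{main conv} and hence into \eqref{main strong conv}. With these two convergences in hand, the remaining passages to the limit are routine.
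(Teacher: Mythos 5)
Your proposal is correct and follows essentially the same route as the paper: the paper's proof of this theorem is precisely the content of the preceding subsections, namely the strong convergence \eqref{main conv} of $f_k(u_k)$ obtained from the measure estimate \eqref{VI} under $p<\frac{n}{n-2}$, the Cauchy argument \eqref{eq:3.59}--\eqref{eq:3.60} yielding \eqref{Cauchy conv} and \eqref{damping conv'}, and the estimates on $F_k(u_k)$ yielding \eqref{main strong conv}, after which the energy identity follows by passing to the limit in \eqref{est2} exactly as you describe. The only minor imprecision is attributing the absence of loss of mass in \eqref{damping conv'} entirely to the Appendix: the Appendix identifies the weak limit \eqref{eq:3.56}, while the strong convergence of the dissipation term is extracted from the Cauchy estimate \eqref{eq:3.60}.
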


\section{Exponential Decay to Problem (\ref{eq:*})}

Throughout this section we will assume that $1\leq p < \frac{n}{n-2}$ if $n\geq 3$ and $p\geq 1$ if $n=1,2$.  Under these conditions we have the following embeddings:
\begin{eqnarray}\label{tower}
H^1_0(\Omega) \hookrightarrow L^{2p}(\Omega)  \hookrightarrow L^{p}(\Omega).
\end{eqnarray}
Consider the auxiliary problem
\begin{equation}\label{Aux Prob}
\left\{
\begin{aligned}
&{\partial_t^2 u_k -\Delta u_k + f_k(u_k) - \operatorname{div}(a(x) \nabla \partial_t u_k + \frac{1}{k}\, b(x) \partial_t u_k= 0\quad \hbox{in}\,\,\,\Omega \times (0, +\infty),}\\\
&{u_k=0\quad \hbox{on}\quad \partial \Omega \times (0,+\infty ),}\\\
&{u_k(x,0)=u_{0,k}(x);\quad \partial_t u_k(x,0)=u_{1,k}(x),\quad x\in\Omega,}
\end{aligned}
\right.
\end{equation}
whose associated energy functional is given by
\begin{eqnarray}\label{energy}\qquad
E_{u_k}(t):= \frac12\int_\Omega |\partial_t u_k(x,t)|^2 + |\nabla u_k(x,t)|^2\,dxdt + \int_\Omega F_k(u_k(x,t))\,dxdt,
\end{eqnarray}
where  $F_k(\lambda) = \int_0^\lambda F_k(s) \,ds$ and the energy identity reads as follows
\begin{eqnarray}\label{ident energy mu}
E_{u_k}(t_2) - E_{u_k}(t_1) = - \int_{t_1}^{t_2}\int_\Omega  a(x) |\nabla \partial_t u_k|^2 +\frac{1}{k}\, b(x) |\partial_t u_k|^2 \,dxdt,
\end{eqnarray}
for all $0 \leq t_1 \leq t_2 <+\infty$.

Let $T_0>0$ be associated to the geometric control condition, that is, every ray of the geometric optics enters $\omega:=\Omega \backslash A$ in a time $T^*<T_0$. Thus, our goal is to prove the observability inequality established in the following lemma.

\begin{Lemma} ~ There exists $k_0 \geq 1$ such that for every $k\geq k_0$, the corresponding solution  $u_k$ of \eqref{Aux Prob} satisfies the inequality
\begin{eqnarray}\label{obs ineq}
E_{u_k}(0) \leq C \left(\int_{0}^{T}\int_\Omega  a(x) |\nabla \partial_t u_k|^2 \,dxdt +\frac{1}{k} \,\int_{0}^{T}\int_\Omega  b(x) |\partial_t u_k|^2\,dxdt\right),
\end{eqnarray}
for all $T>T_0$ and for some positive constant $C=C(||\{u_0,u_1\}||_{H_0^1(\Omega)\times L^2(\Omega)})$.
\end{Lemma}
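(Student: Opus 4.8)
The plan is to prove the observability inequality by a compactness–uniqueness argument, following the strategy outlined in the introduction. First I would argue by contradiction: suppose that for every $k_0\geq 1$ there is some $k\geq k_0$ for which the inequality fails for all constants $C$. This produces, for a fixed such $k$, a sequence of solutions $\{u_k^m\}_{m\in\mathbb{N}}$ of \eqref{Aux Prob} normalized by $E_{u_k^m}(0)=1$, yet satisfying
\begin{equation*}
\int_{0}^{T}\int_\Omega a(x)|\nabla\partial_t u_k^m|^2\,dxdt + \frac{1}{k}\int_{0}^{T}\int_\Omega b(x)|\partial_t u_k^m|^2\,dxdt \longrightarrow 0,\quad m\to+\infty.
\end{equation*}
The goal is then to show $E_{u_k^m}(0)\to 0$, which contradicts the normalization. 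Since $E_{u_k^m}(0)=1$ is bounded, the energy identity \eqref{ident energy mu} gives uniform bounds on $\{u_k^m\}$ in $L^\infty(0,T;H_0^1(\Omega))$ and on $\{\partial_t u_k^m\}$ in $L^\infty(0,T;L^2(\Omega))$, so we may extract weakly-$*$ convergent subsequences.

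The next step is to convert the vanishing damping integrals into the local strong convergence \eqref{convergence}. On $\omega=\Omega\setminus A$ the Kelvin–Voigt coefficient satisfies $a(x)\geq a_0>0$ on compact subsets, so $\nabla\partial_t u_k^m\to 0$ in $L^2(\omega\times(0,T))$, and combined with the frictional term controlling $\partial_t u_k^m$ near $\partial A$ together with the Poincaré inequality one obtains $\partial_t u_k^m\to 0$ strongly in $L^2((\Omega\setminus\mathcal{C})\times(0,T))$, where $\mathcal{C}=A\setminus A_\varepsilon$. I would then introduce the microlocal defect measure $\mu$ associated to $\{\partial_t u_k^m\}$; the above shows $\operatorname{supp}(\mu)\subset\mathcal{C}\times(0,T)\subset A\times(0,T)$. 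On the interior of $A$ we have $a\equiv 0$, so $u_k^m$ solves a wave equation with only the lower-order source $f_k(u_k^m)$ and the small frictional term; after verifying (as asserted in the introduction) that $f_k(u_k^m)\to 0$ in $L^2$, one gets $\Box\,\partial_t u_k^m\to 0$ in $H^{-1}_{loc}(\operatorname{int}A\times(0,T))$, which is precisely the regularity needed to invoke the propagation theorem for microlocal defect measures.

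The heart of the argument, and what I expect to be the main obstacle, is the propagation step. The propagation theorem guarantees that $\operatorname{supp}(\mu)$ is invariant along the bicharacteristic flow of the D'Alembertian, but this only works inside $\operatorname{int}A$ where the $\operatorname{div}(a\nabla\partial_t u_k^m)$ term vanishes; the delicate point is to use the frictional damping $b(x)\geq b_0>0$ on a full neighborhood $V_\varepsilon$ of $\partial A$ to seed the vanishing of $\mu$ on $(V_{\varepsilon/2}\cap A)\times(0,T)$ and then transport it along bicharacteristics into $\mathcal{C}\times(0,T)$. Because the damping is effective on both sides of $\partial A$, every bicharacteristic entering $\mathcal{C}$ must have passed through the region where $\mu$ already vanishes, forcing $\mu\equiv 0$ and hence \eqref{propagation}, namely $\partial_t u_k^m\to 0$ in $L^2(\Omega\times(0,T))$. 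One must handle carefully the geometry near $\partial A$ and the fact that the wave equation characterization only holds in the open interior, so a limiting/covering argument up to $\partial A$ is required.

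Finally, I would close the contradiction using equipartition of energy. Multiplying the equation for $u_k^m$ by $u_k^m$ and integrating over $\Omega\times(0,T)$, together with the already-established convergence $\partial_t u_k^m\to 0$ and $f_k(u_k^m)\to 0$, yields that $\int_0^T\int_\Omega|\nabla u_k^m|^2\,dxdt\to 0$ as well; combined with the vanishing of the potential energy term $\int_\Omega F_k(u_k^m)\,dx$ this gives $\int_0^T E_{u_k^m}(t)\,dt\to 0$. Since the energy is nonincreasing by \eqref{ident energy mu}, we have $T\,E_{u_k^m}(T)\leq \int_0^T E_{u_k^m}(t)\,dt$, and because the dissipated energy over $[0,T]$ also tends to zero we recover $E_{u_k^m}(0)\to 0$, contradicting $E_{u_k^m}(0)=1$. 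The uniformity of $k_0$ and the independence of the constant $C$ from $k$ follow from the fact that all estimates above are controlled by the bounded energy level and the coefficients $a,b$ are fixed, so the argument produces a threshold $k_0$ beyond which \eqref{obs ineq} holds with a single constant.
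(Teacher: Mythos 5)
Your overall architecture (contradiction, compactness, microlocal propagation, equipartition) matches the paper's, but two essential ingredients of the actual proof are missing, and both are load-bearing. First, you assert that the contradiction produces a sequence ``normalized by $E_{u_k^m}(0)=1$''. Because the equation is nonlinear, you cannot rescale a solution and remain a solution: the paper instead keeps the unnormalized sequence (whose energy is merely \emph{bounded}, via the a priori bound \eqref{sequencebound} on the initial data), splits into the cases $u_k\ne 0$ and $u_k=0$ for the weak limit, and only in the second case introduces $\alpha_m=\sqrt{E_{u_k^m}(0)}$ and $v_k^m=u_k^m/\alpha_m$, which solves the \emph{modified} problem \eqref{eq:NP} with nonlinearity $\tfrac{1}{\alpha_m}f_k(\alpha_m v_k^m)$. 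That rescaled nonlinearity then has to be analyzed in the two subcases $\alpha>0$ and $\alpha=0$; in the latter the paper proves, via a Taylor expansion and the estimates \eqref{nl2}--\eqref{nl3}, that it converges weakly to $f'(0)v_k$. None of this appears in your proposal, and without it the normalization step is simply not available.

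Second, and more seriously, you never invoke a unique continuation theorem, which is the ``uniqueness'' half of the compactness--uniqueness scheme. You write that one should verify ``(as asserted in the introduction) that $f_k(u_k^m)\to 0$ in $L^2$'', but this is precisely the point where the hard work happens: the paper first passes to the limit to obtain a limit equation of the form \eqref{limit1'} or \eqref{limit2} with $\partial_t v_k=0$ on $\Omega\setminus\mathcal{C}$, differentiates in time to get a wave equation with bounded potential $f_k'(u_k)$ (here the truncation $f_k$ and its global Lipschitz property from Lemma \ref{Lema2} are essential), and applies the Duyckaerts--Zhang--Zuazua unique continuation theorem to conclude $\partial_t v_k\equiv 0$ and then $v_k\equiv 0$. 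Only after identifying the weak limit as zero does the bound \eqref{crucial bound'} combined with the strong convergence \eqref{conv3} give $\tfrac{1}{\alpha_m}f_k(u_k^m)\to 0$ in $L^2$, which is what upgrades $\Box\,\partial_t v_k^m\to 0$ to $H^{-1}_{loc}(\operatorname{int}A\times(0,T))$ and allows the propagation theorem to run. As written, your argument is circular at this point: you need $f_k(u_k^m)\to 0$ to propagate, but you have no mechanism to produce that convergence. The remaining steps (the support localization of the microlocal defect measure, the propagation from $V_{\varepsilon/2}\cap A$ into $\mathcal{C}$, and the equipartition argument with a cutoff in time) are consistent with the paper's.
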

\begin{proof}
The initial datum $\{u_0,u_1\}\in H_0^1(\Omega)\times L^2(\Omega)$ in the original problem (\ref{eq:*}) is either zero or not zero.

In the first case, when $\{u_0,u_1\} =(0,0)$ and, observing (\ref{conv init data}), we can consider  $\{u_{0,k},u_{1,k}\} =(0,0)$ for all $k\geq 1$ and the corresponding unique solution to the auxiliary problem (\ref{eq:AP}) will be $u_k \equiv 0$.  Then, (\ref{obs ineq}) is verified.

In the second case, there exists a positive number $R>0$ such that
$$0< ||\{u_0,u_1\}||_{H_0^1(\Omega)\times L^2(\Omega)} <R,$$
consider, for instance $R=2||\{u_0,u_1\}||_{H_0^1(\Omega)\times L^2(\Omega)}$.

Therefore, there exists, $k_0 \geq 1$ such that for all $k \geq k_0$, $\{u_{0,k},u_{1,k}\}$ satisfies
\begin{eqnarray}\label{sequencebound}
||\{u_{0,k},u_{1,k}\}||_{H_0^1(\Omega)\times L^2(\Omega)} <R.
\end{eqnarray}

We are going to prove that under condition (\ref{sequencebound}) on the initial datum, the corresponding solution $u_k$ to (\ref{eq:AP}) satisfies (\ref{obs ineq}).  Our proof relies on contradiction arguments.  So, if (\ref{obs ineq}) is false, then there exists $T>T_0$ such that for every $k\geq 1$ and every constant $C>0$, there exists an initial datum $\{u^C_{0,k},u^C_{1,k}\}$ verifying (\ref{sequencebound}), whose corresponding solution $u^C_k$ violates (\ref{obs ineq}).

In particular, for every $k\geq 1$ and $C=m\in\mathbb{N}$, we obtain the existence of an initial datum $\{u^m_{0,k},u^m_{1,k}\}$ verifying (\ref{sequencebound}) and whose corresponding solution $u^m_k$ satisfies

\begin{eqnarray}\label{false}
E_{u^m_k}(0)> m \left(\int_{0}^{T}\int_\Omega  a(x) |\nabla \partial_t u^m_k|^2 \,dxdt +\frac{1}{k} \,\int_{0}^{T}\int_\Omega  b(x) |\partial_t u^m_k|^2\,dxdt\right).
\end{eqnarray}

Then, we obtain a sequence $\{u_k^m\}_{m\in \mathbb{N}}$ of solutions to problem (\ref{eq:AP}) such that
\begin{eqnarray*}
\lim_{m \rightarrow +\infty} \frac{E_{u_k^m}(0)}{\int_{0}^{T}\int_\Omega \left( a(x) |\nabla \partial_t u_k^m|^2 + \frac{1}{k} \,b(x) |\partial_t u_k^m|^2 \right)\,dxdt}=+\infty.
\end{eqnarray*}

Equivalently
\begin{eqnarray}\label{normal conv}
\lim_{m \rightarrow +\infty}\frac{\int_{0}^{T}\int_\Omega \left( a(x) |\nabla \partial_t u_k^m|^2 +\frac{1}{k}\, b(x) |\partial_t u_k^m|^2 \right)\,dxdt}{E_{u_k^m}(0)}=0.
\end{eqnarray}

Since $E_{u_k^m}(0)$ is bounded,  (\ref{normal conv}) yields
\begin{eqnarray}\label{conv damp}
\lim_{m \rightarrow +\infty}\int_{0}^{T}\int_\Omega \left( a(x) |\nabla \partial_t u_k^m|^2 +\frac{1}{k}\, b(x) |\partial_t u_k^m|^2 \right)\,dxdt=0.
\end{eqnarray}

Furthermore, there exists a subsequence of $\{u_k^m\}_{m\in \mathbb{N}}$, Still denoted by $\{u_k^m\}$ , verifying the following convergences:
\begin{eqnarray}
&&u_k^m \rightharpoonup u_k \hbox{ weakly-star in } L^{\infty}(0,T; H_0^1(\Omega)),~\hbox{ as }m\rightarrow +\infty,\label{conv1'}\\
&&\partial_t u_k^m \rightharpoonup \partial_t u_k \hbox{ weakly-star in } L^{\infty}(0,T; L^2(\Omega)),~\hbox{ as }m\rightarrow +\infty,\label{conv2'}\\
&& u_k^m \rightarrow u_k \hbox{ strongly in } L^{\infty} (0,T; L^q(\Omega)), ~\hbox{ as }m\rightarrow +\infty, ~\hbox{ for all } q\in \left[2, \frac{2n}{n-2}\right),\label{conv3'}
\end{eqnarray}
where the last convergence is obtained using Aubin-Lions-Simon Theorem (see \cite{Simon}).  The proof is divided into two distinguished cases: $u_k\ne 0$ and $u_k=0$.

\medskip
Case~(a):~$u_k\ne 0$. ~
\medskip

For $m\in \mathbb{N}$, $u_k^m$ is the solution to the problem
\begin{equation*}
\left\{
\begin{aligned}
&{\partial_t^2 u_k^m -\Delta u_k^m + f_k(u_k^m) - \operatorname{div}(a(x) \nabla \partial_t u_k^m + \frac{1}{k}\, b(x) \partial_t u_k^m= 0\quad \hbox{in}\,\,\,\Omega \times (0, +\infty),}\\\
&{u_k^m=0\quad \hbox{on}\quad \partial \Omega \times (0,+\infty ),}\\\
&{u_k^m(x,0)=u_{0,k}^m(x);\quad \partial_t u_k^m(x,0)=u_{1,k}^m(x),\quad x\in\Omega.}
\end{aligned}
\right.
\end{equation*}

Taking (\ref{conv damp}) into consideration we obtain
\begin{equation}\label{limit1'}
\left\{
\begin{aligned}
&{\partial_t^2 u_k -\Delta u_k + f_k(u_k)= 0\quad \hbox{in}\,\,\,\Omega \times (0, +\infty),}\\\
&{u_k=0\quad \hbox{on}\quad \partial \Omega \times (0,+\infty ),}\\\
&{\partial_t u_k}=0 \hbox{ a.e. in }\Omega\backslash \mathcal{C}.
\end{aligned}
\right.
\end{equation}

Defining $y_k = \partial_t u_k$, the above problem yields
\begin{equation*}
\left\{
\begin{aligned}
&{\partial_t^2 y_k -\Delta y_k + f_k'(u_k)y_k= 0\quad \hbox{in}\,\,\,\Omega \times (0, +\infty),}\\\
&{y_k=0\quad \hbox{on}\quad \partial \Omega \times (0,+\infty ),}\\\
&{y_k}=0 \hbox{ a.e. in }\Omega\backslash \mathcal{C}.
\end{aligned}
\right.
\end{equation*}

Once $ f_k'(u_k)\in L^\infty (\Omega \times (0,T))$ since $f_k$ is globally Lipschitz, for each $k \in m\in \mathbb{N}$, we deduce from the uniqueness theorem due to Duyckaerts, Zhang and Zuazua, (see Theorem 2.2 in \cite{DZZ}), that $y_k= \partial_t u_k\equiv 0$.  Returning to (\ref{limit1'}) we conclude that $u_k\equiv 0$ as well and we obtain the desired contradiction.

\begin{Remark}
	In the case $\rho=\sqrt{\operatorname{det}g^{ij}}$, we can apply the Theorem 8.1 from \cite{Triggiani} combined with a density argument (since the potential $f_k'(u_k)$ is essentially bounded) to obtain the desired unique continuation property.
\end{Remark}

\medskip
Case~(b):~$u_k= 0$. ~
\medskip

Setting
\begin{eqnarray}\label{def v_k}
\alpha_m := \sqrt{E_{u_k^m}(0)}, ~ \hbox{ and }~v_k^m:= \frac{u_k^m}{\alpha_m},
\end{eqnarray}
in light of (\ref{normal conv}), we obtain
\begin{eqnarray}\label{damping conv}
\lim_{m \rightarrow +\infty} \int_{0}^{T}\int_\Omega \left( a(x) |\nabla \partial_t v_k^m|^2 + \frac{1}{k}\,b(x) |\partial_t v_k^m|^2 \right)\,dxdt=0.
\end{eqnarray}

According to (\ref{def v_k}), the sequence  $\{v_k^m\}_{m\in \mathbb{N}}$ is the solution to the following problem:
\begin{equation}\label{eq:NP}
\left\{
\begin{aligned}
&{\partial_t^2 v_k^m -\Delta v_k^m + \frac{1}{\alpha_m} f_k(u_k^m)  - \operatorname{div}(a(x) \nabla \partial_t v_k^m + \frac{1}{k} \,b(x) \partial_t v_k^m= 0\quad \hbox{in}\,\,\,\Omega \times (0, T),}\\\
&{v_k^m=0\quad \hbox{on}\quad \partial \Omega \times (0,T),}\\\
&{v_k^m(x,0)=\frac{u_{0,k}^m}{\alpha_m};\quad \partial_t v_k^m(x,0)=\frac{u_{1,k}^m}{\alpha_m}}
\end{aligned}
\right.
\end{equation}
and the associated energy functional is given by
\begin{eqnarray*}
E_{v_k^m}(t) = \frac12 \int_\Omega \left(|\partial_t v_k^m|^2 + |\nabla v_k^m|^2 \right)\,dx + \frac{1}{\alpha_m^2} \int_\Omega F_k(u_k^m)\,dx,
\end{eqnarray*}
since
$$
\frac{1}{\alpha_m}\int_\Omega f_k(u_k^m) \partial_t v_k^m\,dx =\frac{1}{\alpha_m^2} \frac{d}{dt}\int_\Omega F(u_k^m)\,dx.$$

Note that $E_{v_k^m}(t)= \frac{1}{\alpha_m^2} E_{u_k^m}(t)$ for all $t\geq 0$ and, in particular, for $t=0$
\begin{eqnarray}\label{norm initial energy}
E_{v_k^m}(0)= \frac{1}{\alpha_m^2} E_{u_k^m}(0)=1,~\hbox{ for all }m\in \mathbb{N}.
\end{eqnarray}

In order to achieve the contradiction we are going to prove that
\begin{eqnarray}\label{main goal}
\lim_{m\rightarrow +\infty} E_{v_k^m}(0)=0.
\end{eqnarray}

Indeed, initially, we observe that (\ref{norm initial energy}) yields the existence of a subsequence of $\{v_k^m\}_{m\in \mathbb{N}}$, reindexed again by $\{v_k^m\}$, such that
\begin{eqnarray}
&&v_k^m \rightharpoonup v_k \hbox{ weakly-star in } L^{\infty}(0,T; H_0^1(\Omega)),~\hbox{ as }m\rightarrow +\infty,\label{conv1}\\
&&\partial_t v_k^m \rightharpoonup \partial_t v_k \hbox{ weakly-star in } L^{\infty}(0,T; L^2(\Omega)),~\hbox{ as }m\rightarrow +\infty,\label{conv2}\\
&& v_k^m \rightarrow v_k \hbox{ strongly in } L^{\infty} (0,T; L^q(\Omega)), ~\hbox{ as }m\rightarrow +\infty, ~\hbox{ for all } q\in \left[2, \frac{2n}{n-2}\right).\label{conv3}
\end{eqnarray}

For some eventual subsequence, we have that $\alpha_m \rightarrow \alpha$ with $\alpha\geq0$.

\medskip

If $\alpha>0$, let $A_\varepsilon:= A + \overline{B_{\varepsilon/2}(0)}$ and $\mathcal{C} :=A\backslash A_\varepsilon=\{x\in A: d(x,y)>\varepsilon/2,~y\in \partial A\}$.

Passing to the limit in (\ref{eq:NP}) and considering convergences (\ref{damping conv}),(\ref{conv1}) - (\ref{conv3}), we deduce
\begin{equation}\label{limit1}
\left\{
\begin{aligned}
&{\partial_t^2 v_k -\Delta v_{k} + \frac{1}{\alpha} f_k(u_k) = 0\quad \hbox{ in }\,\,\,\Omega \times (0, T),}\\\
&{v_k=0\quad \hbox{on}\quad \partial \Omega \times (0,T),}\\\
&{\partial_t v_k=0 \hbox{ a.e. in }\Omega \backslash \mathcal{C}.}
\end{aligned}
\right.
\end{equation}

The above problem yields, for $w_k= \partial_t v_k$, in the distributional sense,
\begin{equation}
\left\{
\begin{aligned}
&{\partial_t^2 w_k -\Delta w_k + \frac{1}{\alpha} f_k'(u_k) w_k = 0\quad \hbox{ in }\,\,\,\Omega \times (0, T),}\\\
&{w_k=0\quad \hbox{on}\quad \partial \Omega \times (0, T),}\\\
&{w_k=0 \hbox{ a.e. in }\Omega \backslash \mathcal{C}.}
\end{aligned}
\right.
\end{equation}

Once $\frac{1}{\alpha} f_k'(u_k)\in L^\infty (\Omega \times (0,T))$, using again standard uniqueness theorem, we conclude that $w_k= \partial_t v_k\equiv 0$, and, therefore, returning to (\ref{limit1}) we deduce that $v_k\equiv 0$.

\medskip

If $\alpha=0$, first, observe that hypothesis (\ref{Lipschitz}) yields
\begin{eqnarray*}
\frac{1}{\alpha_m^2} |f_k(u_k^m)|^2 \leq c \frac{1}{\alpha_m^2} |u_k^m|^2 =c \frac{1}{\alpha_m^2}\alpha_m^2 |v_k^m|^2,
\end{eqnarray*}
and
\begin{eqnarray}\label{crucial bound'}
\frac{1}{\alpha_m^2} \int_0^T \int_\Omega |f_k(u_k^m)|^2 \,dxdt \leq  c \int_0^T\int_\Omega |v_k^m|^2\,dxdt.
\end{eqnarray}

We are going to prove that
\begin{equation}\label{nl1}
\frac{1}{\alpha_m}f_k(\alpha_m v_k^m) \rightharpoonup f'(0)v_k \hbox{ in } L^2(0,T;L^2(\Omega)) \hbox{ as } m \rightarrow \infty.
\end{equation}

Since
\begin{equation*}
\frac{1}{\alpha_m}f_k(\alpha_m v_k^m) - f'(0)v_k=\frac{1}{\alpha_m}f_k(\alpha_m v_k^m)-\frac{1}{\alpha_m}f(\alpha_m v_k^m)+\frac{1}{\alpha_m}f(\alpha_m v_k^m) - f'(0)v_k,
\end{equation*}
if we prove that
\begin{equation}\label{nl2}
\frac{1}{\alpha_m}f_k(\alpha_m v_k^m)-\frac{1}{\alpha_m}f(\alpha_m v_k^m) \rightarrow 0 \hbox{ in } L^2(0,T;L^2(\Omega))
\end{equation}
and
\begin{equation}\label{nl3}
\frac{1}{\alpha_m}f(\alpha_m v_k^m) - f'(0)v_k \rightharpoonup 0
\hbox{ in } L^2(0,T;L^2(\Omega)),
\end{equation}
as $m \rightarrow \infty$, we prove (\ref{nl1}).

To prove (\ref{nl2}), let's consider $$\Omega_m^t=\{x \in \Omega: |u_k^m(x,t)|>k\}.$$

Employing definition (\ref{trunc func}), $|f_k(\alpha_m v_k^m)- f(\alpha_m v_k^m)|=0$ in $\Omega \setminus \Omega_m^t$.  Then, hypotheses (\ref{main ass on f}) and (\ref{ass on f}) yield
\begin{small}
\begin{align*}
&\left\| \frac{1}{\alpha_m}f_k(\alpha_m v_k^m)-\frac{1}{\alpha_m}f(\alpha_m v_k^m) \right\|_{L^2(0,T;L^2(\Omega))}^2 \\
= {}& \int_{0}^{T}\int_{\Omega_m^t} \left|\frac{1}{\alpha_m}f_k(\alpha_m v_k^m)-\frac{1}{\alpha_m}f(\alpha_m v_k^m) \right|^2 \, dxdt\\
= {}&\frac{1}{\alpha_m^2}\int_{0}^{T}\int_{\Omega_m^t} \left|f_k(\alpha_m v_k^m)-f(\alpha_m v_k^m) \right|^2 \, dxdt\\
\lesssim {}& \frac{1}{\alpha_m^2}\int_{0}^{T}\int_{\Omega_m^t}|f_k(\alpha_m v_k^m )|^2\, dx dt+\frac{1}{\alpha_m^2}\int_{0}^{T}\int_{\Omega_m^t} |f(\alpha_m v_k^m)|^2 \, dx dt \\
\lesssim {}& \int_{0}^{T}\int_{\Omega_m^t} |f(k)|^2+|f(-k)|^2 \, dx dt+\frac{1}{\alpha_m^2}\int_{0}^{T}\int_{\Omega_m^t} |\alpha_m v_k^m|^2 + |\alpha_m v_k^m|^{2p} \, dx dt\\
\lesssim {}& \int_{0}^{T}\int_{\Omega_m^t} |k|^2+|k|^{2p} \, dx dt+\frac{1}{\alpha_m^2}\int_{0}^{T}\int_{\Omega_m^t} |\alpha_m v_k^m|^2 + |\alpha_m v_k^m|^{2p} \, dx dt\\
\lesssim {}& \frac{1}{\alpha_m^2}\int_{0}^{T}\int_{\Omega_m^t} |u_k^m|^2 + |u_k^m|^{2p} \, dx dt+\frac{1}{\alpha_m^2}\int_{0}^{T}\int_{\Omega_m^t} |\alpha_m v_k^m|^2 + |\alpha_m v_k^m|^{2p} \, dx dt.
\end{align*}
\end{small}

Since $p>1$, $k\geq 1$ and $ k <|u_k^m|=|\alpha_m v_k^m|$ in $\Omega_m^t$, we obtain
\begin{small}
\begin{align*}
\left\| \frac{1}{\alpha_m}f_k(\alpha_m v_k^m)-\frac{1}{\alpha_m}f(\alpha_m v_k^m) \right\|_{L^2(0,T;L^2(\Omega))}^2
\lesssim {}& \frac{1}{\alpha_m^2}\int_{0}^{T}\int_{\Omega_m^t} |\alpha_m v_k^m|^{2p} \, dx dt\\
\lesssim {}& \alpha_m^{2(p-1)} \|v_k^m\|_{L^{2p}(0,T;L^{2p}(\Omega))}^{2p}\rightarrow 0, \hbox{ as } m \rightarrow \infty,
\end{align*}
\end{small}
which proves the convergence \eqref{nl2}.

On the other hand, $f\in C^2(\mathbb{R})$ and, consequently, from Taylor's Theorem and (\ref{main ass on f}) we have
\begin{eqnarray}\label{Taylor}
f(s) = f'(0) s + R(s), \hbox{ where } |R(s)|\leq C(|s|^2 + |s|^p).
\end{eqnarray}

Hence
\begin{eqnarray}\label{ident}
&&\frac{1}{\alpha_m}f(\alpha_m v_k^m)= f'(0) v_k^m + \frac{R(\alpha_m v_k^m)}{\alpha_m}\label{T1}
\end{eqnarray}
and
\begin{equation}\label{T2}
 \left|\frac{R(\alpha_m v_k^m)}{\alpha_m}\right| \leq C \left(\alpha_m|v_k^m|^2 + |\alpha_m|^{p-1} |v_k^m|^p\right).
\end{equation}

In light of identity (\ref{Taylor}), we establish $ \frac{R(\alpha_m v_k^m)}{\alpha_m}=\frac{f(\alpha_m v_k^m)}{\alpha_m}-f'(0)v_k^m$ and hypotheses (\ref{main ass on f}) and (\ref{ass on f}) imply that  $|f(\alpha_m v_k^m)| \lesssim |\alpha_m v_k^m|+|\alpha_m v_k^m|^p$.  Then, we deduce that
\begin{equation*}
\left\| \frac{R(\alpha_m v_k^m)}{\alpha_m} \right\|_{L^2(0,T;L^2(\Omega))}^2 \lesssim \|v_k^m\|_{L^2(0,T;L^2(\Omega))}^2+|\alpha_m|^{2(p-1)}\|v_k^m\|_{L^{2p}(0,T;L^{2p}(\Omega))}^{2p} \leq C,
\end{equation*}
for some constant $C>0$.   We obtain a subsequence of $\frac{R(\alpha_m v_k^m)}{\alpha_m}$ and $\gamma \in L^2(0,T;L^2(\Omega))$ such that
\begin{equation}\label{resto1}
\frac{R(\alpha_m v_k^m)}{\alpha_m} \rightharpoonup \gamma \hbox{ in } L^2(0,T;L^2(\Omega)).
\end{equation}

Besides, employing inequality (\ref{T2}) and observing (\ref{tower}), we get
\begin{eqnarray}\label{resto2}
\left|\left| \frac{R(\alpha_m v_k^m)}{\alpha_m}\right|\right|_{L^1(0,T;L^1(\Omega))}&\lesssim& \int_0^T \int_\Omega \alpha_m |v_k^m|^2 \,dxdt + \int_0^T\int_\Omega \alpha_m^{p-1} |v_k^m|^{p}\,dxdt \nonumber\\
&=& \alpha_m\int_{0}^{T}\|v_k^m\|_{L^2(\Omega)}^2 \, dt+\alpha_m^{p-1}\int_{0}^{T}\|v_k^m\|_{L^p(\Omega)}^{p}\, dt \nonumber \\
&=&  \alpha_m ||v_k^m||_{L^2(0,T; L^2(\Omega))}^2 + \alpha_m^{p-1}||v_k^m||_{L^p(0,T;L^{p}(\Omega))}^{p}\rightarrow 0.
\end{eqnarray}

From \eqref{resto1} and \eqref{resto2} we conclude that
\begin{equation}\label{resto3}
\frac{R(\alpha_m v_k^m)}{\alpha_m} \rightharpoonup 0 \hbox{ in } L^2(0,T;L^2(\Omega)).
\end{equation}

Observing \eqref{conv3}, \eqref{ident} and \eqref{resto3}, the convergence \eqref{nl3} is proved.

\begin{Remark}
	The case $p=1$ is trivially contemplated once the truncation is not necessary.
\end{Remark}

Since convergences (\ref{nl2}) and (\ref{nl3}) are proved, we conclude convergence (\ref{nl1}).

Passing to the limit in (\ref{eq:NP}) as $m\rightarrow +\infty$, we obtain
\begin{equation}\label{limit2}
\left\{
\begin{aligned}
&{\partial_t^2 v_k -\Delta v_k + f'(0) v_k  = 0\quad \hbox{ in }\,\,\,\Omega \times (0, T),}\\\
&{v_k=0\quad \hbox{on}\quad \partial \Omega \times (0, T),}\\\
&{\partial_t v_k=0 \hbox{ a.e. in }\Omega \backslash  \mathcal{C},}
\end{aligned}
\right.
\end{equation}
and defining $w_k= \partial_t v_k$, it satisfies the following problem:
\begin{equation}
\left\{
\begin{aligned}
&{\partial_t^2 w_k -\Delta w_k + f'(0) w_k = 0\quad \hbox{ in }\,\,\,\Omega \times (0, T),}\\\
&{w_k=0\quad \hbox{on}\quad \partial \Omega \times (0, T),}\\\
&{w_k=0 \hbox{ a.e. in }\Omega \backslash  \mathcal{C}.}
\end{aligned}
\right.
\end{equation}

Using standard uniqueness results we obtain that $w_k= \partial_t v_k\equiv 0$ and returning to \eqref{limit2} we deduce that $v_k\equiv 0$.

Then, in both cases $\alpha =0$ and $\alpha >0$, we obtain that $v_k\equiv 0$.  Consequently,  inequality (\ref{crucial bound'}) and convergence (\ref{conv3}) yield that
\begin{eqnarray}\label{crucial bound}
\frac{1}{\alpha_m^2} \int_0^T \int_\Omega |f_k(u_k^m)|^2 \,dxdt \rightarrow 0 \hbox{ in } L^2(0,T,L^2(\Omega)).
\end{eqnarray}

Now, let's consider $\Box= \partial_t - \Delta$ the d'Alembert operator. Thanks to (\ref{eq:NP}), we have that
\begin{eqnarray*}
\Box v_k^m = -\frac{1}{\alpha_m} f_k(u_k^m)  + \operatorname{div}(a(x) \nabla \partial_t v_k^m - \frac{1}{k}\,b(x) \partial_t v_k^m,
\end{eqnarray*}
that is,
\begin{eqnarray}\label{main identity}
\partial_t\Box v^m =\partial_t \left(  -\frac{1}{\alpha_m} f_k(u_k^m)  + \operatorname{div}(a(x) \nabla \partial_t v_k^m) - \frac{1}{k}\,b(x) \partial_t v_k^m\right).
\end{eqnarray}

Let us analyse the terms on the RHS of (\ref{main identity}).
\smallskip

\noindent{Analysis of $I_1^m:=  -\frac{1}{\alpha_m} f_k(u_k^m) $.}

\smallskip

Employing convergence (\ref{crucial bound}), we deduce that
$ I_1^m\rightarrow 0$ strongly in $ L^2(\Omega \times (0,T))$ as $m \rightarrow +\infty$.

\medskip

\noindent{Analysis of $I_3^m:= - \frac{1}{k}\,b(x) \partial_t v_k^m$.}

\medskip

We trivially obtain from (\ref{damping conv}) that $- \frac{1}{k}\,b(x) \partial_t v_k^m \rightarrow 0$ strongly in $L^2(\Omega \times (0,T))$  as $m \rightarrow +\infty$.

\medskip

\noindent{Analysis of $I_2^m:= \operatorname{div}(a(x) \nabla \partial_t v_k^m)$.}

\medskip

Recalling convergence (\ref{damping conv}), we deduce that $a(x) \nabla \partial_t v_k^m \rightarrow 0 \hbox{ strongly in } L^2(\Omega \times (0,T))$ as $m \rightarrow +\infty$ and, consequently, $\operatorname{div}(a(x) \nabla \partial_t v_k^m) \rightarrow 0 \hbox{ strongly in } H^{-1}_{loc}(\Omega \times (0,T))$ as $m \rightarrow +\infty$.

The above convergences yield
\begin{eqnarray*}\quad
\partial_t \left(  -\frac{1}{\alpha_m} f_k(u_k^m) + \operatorname{div}(a(x) \nabla \partial_t v_k^m) - \frac{1}{k}\,b(x) \partial_t v_k^m\right) \rightarrow 0 \hbox{ strongly in } H^{-2}_{loc}(\Omega \times (0,T)),
\end{eqnarray*}
that is, from (\ref{main identity}) we conclude that
\begin{eqnarray}\label{Gerard1}
\Box \partial_t v_k^m \rightarrow 0 \hbox{ strongly in } H^{-2}_{loc}(\Omega \times (0,T)),
\end{eqnarray}
as $m \rightarrow +\infty$.

Let $\mu$ be the microlocal defect measure associated with $\{\partial_t v_k^m\}$ in $L^2_{loc}(\Omega \times (0,T))$.  The convergence (\ref{Gerard1}) guarantees that the $\hbox{supp}(\mu)$ is contained in the characteristic set of the wave operator $\{\tau^2=||\xi||^2\}$~(see G\'erard  \cite{Gerad} (Proposition 2.1 and Corollary 2.2) or Theorem \ref{Theoremak 4.55} in the Appendix).

Our goal is to propagate the convergence of $\partial_t v_k^m$ from $L^2((\Omega \backslash \mathcal{C}) \times (0,T))$ to the whole space $L^2(\Omega \times (0,T))$.  However, the convergence in (\ref{Gerard1}) is not regular enough to guarantee the desired propagation.  This lack of regularity comes from the following convergence:
\begin{eqnarray*}
\partial_t \left(\operatorname{div}(a(x) \nabla \partial_t u_k^m) \right)  \rightarrow 0 \hbox{ strongly in } H^{-2}_{loc}(\Omega \times (0,T)), \hbox{ as }m \rightarrow +\infty,
\end{eqnarray*}

At this point, the presence of the frictional damping in the neighbourhood $V_{\varepsilon}$  of the boundary $\partial A$ of $A:=\{x\in \Omega: a(x)=0\}$ plays an essential role to obtain the propagation to the whole space.  Indeed, note that $a(x) \nabla \partial_t u_k^m=0$ in $A \times (0,T)$, consequently,  from (\ref{main identity})  we have that
\begin{eqnarray}\label{Gerard2}
\Box \partial_t v_k^m = \partial_t \left(  -\frac{1}{\alpha_m} f_k(u_k^m) - \frac{1}{k}\,b(x) \partial_t v_k^m\right) \rightarrow 0 \hbox{ in } H^{-1}_{loc}(\hbox{int}\, A \times (0,T)),
\end{eqnarray}
as $m \rightarrow +\infty.$

The above convergence yields that $\mu$ propagates along the bicharacteristic flow of the D'Alembertian operator, which means that if there is $\omega_0=(t_0,x_0,\tau_0,\xi_0)$ such that $\omega_0=(t_0,x_0,\tau_0,\xi_0) \notin \hbox{supp}(\mu)$, the whole bicharacteristic issued from $\omega_0$ does not belong to $\hbox{supp}(\mu)$ (see Proposition \ref{proposition5.1} and Theorem \ref{Theoremak 4.60} in the Appendix). However, since $\hbox{supp}(\mu) \subset \mathcal{C} \times (0,T)\subset A \times (0,T)$, and the frictional damping acts in both sides of the boundary $\partial A$, we can propagate the kinetic energy from $(V_{ {\varepsilon} /2} \cap A) \times (0,T)$ towards the set $\mathcal{C} \times (0,T)$.

On the other hand, Poincar\'e inequality combined with the existence of the dissipative effects, yield
\begin{eqnarray}\label{kinectic convergence Omega-C}
\partial_t  v_k^m \rightarrow 0 \hbox{ in } (\Omega\backslash \mathcal{C}) \times (0,T),
\end{eqnarray}
consequently, we obtain
\begin{eqnarray}\label{kinectic convergence}
\int_0^T \int_\Omega |\partial_t  v_k^m(x,t)|^2\,dxdt \rightarrow 0, ~\hbox{ as }m \rightarrow + \infty.
\end{eqnarray}

Then, we prove that $ E_{v_k^m}(0)\rightarrow 0$ as $m \rightarrow +\infty$.  In fact, consider the following cut-off function
\begin{align*}
&\theta\in C^{\infty}(0,T),  \quad    0\leq \theta(t) \leq 1,  \quad   \theta(t)=1 \ \mbox{in} \ (\varepsilon,T-\varepsilon), \varepsilon>0.
\end{align*}

Multiplying equation (\ref{eq:NP}) by $v_k^m \theta$ and integrating by parts, we infer
\begin{eqnarray}\label{equipartition}
&& -\int_0^T \theta(t)\int_\Omega |\partial_t v_k^m|^2\,dxdt - \int_0^T \theta'(t)\int_\Omega \partial_t v_k^m v_k^m\,dxdt\\
&&+\int_0^T \theta(t) \int_\Omega  |\nabla v_k^m|^2 \,dxdt + \frac{1}{\alpha_m}\int_0^T \theta(t) \int_\Omega f_k(v_k^m) v_k^m \,dxdt\nonumber\\
&&+ \int_0^T \theta(t) \int_\Omega a(x) \nabla \partial_t v_k^m \cdot \nabla v_k^m\,dxdt\nonumber\\
&&+\frac{1}{k}\,\int_0^T \theta(t) \int_\Omega b(x) \partial_t v_k^m  v_k^m\,dxdt =0.\nonumber
\end{eqnarray}

Considering the convergences (\ref{damping conv}), (\ref{conv1})-(\ref{conv3}) and (\ref{kinectic convergence}) and employing the fact that $v_k=0$,  identity (\ref{equipartition}) yields
\begin{equation}\label{convgradto0}
\lim_{m \rightarrow +\infty}\int_\varepsilon^{T-\varepsilon} \int_\Omega  |\nabla v_k^m|^2 +  \frac{1}{\alpha_m} f_k(v_k^m)v_k^m \,dxdt = 0.
\end{equation}

In addition,using the definition of functions $f_k$ and $F_k$ and taking advantage of property (\ref{ass on F}), the last convergence implies
\begin{eqnarray}\label{convFto0}
\lim_{m \rightarrow +\infty}\frac{1}{\alpha_m^2}\int_\varepsilon^{T-\varepsilon} \int_\Omega F_k(v_k^m) \,dxdt=0.
\end{eqnarray}

Convergences (\ref{kinectic convergence}), (\ref{convgradto0}) and (\ref{convFto0}) establish that
 $\int_\varepsilon^{T-\varepsilon} E_{v_k^m}(t) \rightarrow 0$ and recalling that the energy functional is a non-increasing function, we obtain
\begin{equation}\label{convenergyto0}
(T- 2\varepsilon) E_{v_k^m}(T-  \varepsilon) \rightarrow 0,~\hbox{as}~m\rightarrow +\infty.
\end{equation}

Using the energy identity we deduce that
\begin{equation*}
E_{v_k^m}(T- \varepsilon) - E_{v_k^m}(\varepsilon) = - \int_{\varepsilon}^{T- \varepsilon} \int_{\Omega} a(x)|\nabla \partial_t v_k^m|^2+ \frac{1}{k}\,b(x)|\partial_t v_k^m|^2 \,dxdt,
\end{equation*}
which, in view of  convergences (\ref{damping conv}) and (\ref{convenergyto0}) and the arbitrariness of $\varepsilon >0$, implies that $E_{v_k^m}(0) \rightarrow 0$ as $m\rightarrow +\infty$.

Then, according to (\ref{main goal}), the desired contradiction is achieved and we finish the proof.
\end{proof}

\medskip

In what follows, we are going to conclude the exponential stability to the problem (\ref{eq:*}).

Thanks to inequality (\ref{obs ineq}), the auxiliary problem (\ref{eq:AP}) satisfies the following observability inequality:
\begin{equation}\label{trunc observ ineq}
E_{u_k}(0) \leq C\,\int_0^{T}\int_{\Omega}\left(\frac{1}{k}\,b(x)|\partial_t u_k|^2 + a(x) |\nabla \partial_t u_k|^2 \right)\,dx\,dt,\hbox{ for all } T\geq T_0, \hbox{ and } k\in \mathbb{N},~ k \geq k_0,
\end{equation}
where $C$ is a positive constant which does not depend on $k\in \mathbb{\mathbb{N}}$.

Passing to the limit as $k\rightarrow +\infty$ and observing convergences (\ref{eq:3.54}), (\ref{Cauchy conv}), (\ref{damping conv'}) and (\ref{main strong conv}), the above inequality yields  the observability inequality associated to the original problem (\ref{eq:*}), that is,
\begin{equation}
\label{final obs_inequality}
E_{u}(0) \leq C\,\int_0^{T}\int_{\Omega} a(x) |\nabla \partial_t u|^2 \,dx\,dt, \hbox{ for all } T\geq T_0.
\end{equation}	 	 	

On the other hand, passing to the limit as $k\rightarrow +\infty$ and considering the same convergences (\ref{eq:3.54}), (\ref{Cauchy conv}), (\ref{damping conv'}) and (\ref{main strong conv}), identity (\ref{est2}) yields the identity associated to the original problem (\ref{eq:*}), namely,
\begin{eqnarray}\label{final energy ident}\quad
E_u(t_2) - E_u (t_1) + \int_{t_1}^{t_2} \int_\Omega  a(x) |\nabla \partial_t u|^2 \,dx\,dt=0,~\hbox{ for all }0\leq t_1 < t_2<+\infty.
\end{eqnarray}

Gathering together (\ref{final obs_inequality}), (\ref{final energy ident}), and since the map $t \mapsto E_{u}(t)$ is a non-increasing function, we obtain
\begin{equation}
\begin{aligned}
E_{u}(T_0)&\leq C\,\int_0^{T_0}\int_{\Omega}\left(a(x) |\nabla \partial_t u|^2\right)\,dx\,dt\\&=C\,\left(E_{u}(0)-E_{u}(T_0)\right),
\end{aligned}
\end{equation}
that is,
\begin{eqnarray}
\label{inequality}E_{u}(T_0)&\leq&\left(\frac{C}{1+C}\right)\,E_{u}(0).
\end{eqnarray}

Repeating the same steps for $m T_0$, $m\in \mathbb{N}, m\geq 1$, we deduce
$$E_{u}(m T_0)\leq\frac{1}{(1+\hat{C})^m}E_{u}(0),$$
where $\hat{C}=C^{-1}$. Consider $t\geq T_0$ and $t=m T_0+r,$ $0\leq r<T_0$. Thus,
$$E_{u}(t)\leq E_{u}(t-r)=E_{u}(m T_0)\leq \frac{1}{(1+\hat{C})^m}\,E_{u}(0)=\frac{1}{(1+\hat{C})^{\frac{t-r}{T_0}}}E_{u}(0).$$
	 	
Defining $\displaystyle C:=\textrm{e}^{\frac{r}{T_0}\ln(1+\hat{C})}$ and $\lambda_0:=\frac{\ln(1+\hat{C})}{T_0}>0,$ we obtain
\begin{equation}\label{exp'}
E_{u}(t)\leq C \,\textrm{e}^{- \lambda_0 t}E_{u}(0) \hbox{ for all } t\geq
T_0,
\end{equation}
which proves the exponential decay  to problem (\ref{eq:*}) and we prove the following result.

\begin{Theorem}\label{theo 3}
Under the assumptions of Theorem \ref{theo 2}, there exist positive constants $C$ and $\gamma$ such that the following exponential decay holds
\begin{equation}\label{exp}
	 	 	E_{u}(t)\leq C \,\textrm{e}^{- \lambda_0 t}E_{u}(0), \hbox{ for all } t\geq T_0.
\end{equation}
for every solution to problem (\ref{eq:*}), provided that the initial data are taken in bounded sets of the phase-space $\mathcal{H}:= H_0^1(\Omega) \times L^2(\Omega)$.
\end{Theorem}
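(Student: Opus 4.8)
The plan is to derive the exponential decay directly from the observability inequality (\ref{final obs_inequality}) and the energy identity (\ref{final energy ident}), both of which have already been obtained for the limiting problem (\ref{eq:*}) by passing to the limit as $k\to+\infty$ in their uniform counterparts for the auxiliary problem (\ref{eq:AP}). I want to stress that the genuine difficulty of the whole argument lies entirely upstream, in establishing the uniform-in-$k$ observability inequality (\ref{obs ineq}): that is where the contradiction scheme, the microlocal defect measure $\mu$ attached to $\{\partial_t v_k^m\}$, its propagation along the bicharacteristic flow of $\Box$ inside $\hbox{int}\,A\times(0,T)$, the role of the frictional damping $b(x)$ on both sides of $\partial A$, and the Duyckaerts--Zhang--Zuazua unique continuation theorem are all used. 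Granting (\ref{final obs_inequality}) and (\ref{final energy ident}), the passage to the decay estimate is a short and standard iteration.

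Concretely, I would first specialize the energy identity (\ref{final energy ident}) to $t_1=0$ and $t_2=T_0$, obtaining
\begin{equation*}
\int_0^{T_0}\int_\Omega a(x)\,|\nabla\partial_t u|^2\,dx\,dt = E_u(0)-E_u(T_0).
\end{equation*}
Since $t\mapsto E_u(t)$ is non-increasing we have $E_u(T_0)\le E_u(0)$, so combining this with the observability inequality (\ref{final obs_inequality}) evaluated at $T=T_0$ yields
\begin{equation*}
E_u(T_0)\le E_u(0)\le C\bigl(E_u(0)-E_u(T_0)\bigr),
\end{equation*}
and therefore the strict one-step contraction
\begin{equation*}
E_u(T_0)\le \frac{C}{1+C}\,E_u(0),\qquad \frac{C}{1+C}<1.
\end{equation*}

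Next I would iterate this contraction. Problem (\ref{eq:*}) is autonomous and the energy is non-increasing, so the same estimate applies on every interval $[mT_0,(m+1)T_0]$, giving $E_u((m+1)T_0)\le (1+\hat{C})^{-1}E_u(mT_0)$ with $\hat{C}:=C^{-1}$, and hence by induction $E_u(mT_0)\le (1+\hat{C})^{-m}E_u(0)$. Finally, for $t\ge T_0$ I write $t=mT_0+r$ with $0\le r<T_0$ and use monotonicity to bound $E_u(t)\le E_u(mT_0)\le (1+\hat{C})^{-m}E_u(0)$; substituting $m=(t-r)/T_0$ and absorbing the bounded $r$-factor into the constant produces
\begin{equation*}
E_u(t)\le C\,e^{-\lambda_0 t}E_u(0),\qquad \lambda_0:=\frac{\ln(1+\hat{C})}{T_0}>0,
\end{equation*}
which is exactly (\ref{exp}). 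The uniformity over bounded sets of $\mathcal{H}$ is automatic, since the observability constant $C$ depends only on the radius $R=2\|\{u_0,u_1\}\|_{H_0^1(\Omega)\times L^2(\Omega)}$ fixed in the observability lemma; the main obstacle, as noted, is (\ref{obs ineq}) rather than this concluding iteration.
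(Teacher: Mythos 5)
Your proposal is correct and follows essentially the same route as the paper: the paper likewise combines the limiting observability inequality (\ref{final obs_inequality}) with the energy identity (\ref{final energy ident}) on $[0,T_0]$ to get the contraction $E_u(T_0)\le \frac{C}{1+C}E_u(0)$, iterates it over $[mT_0,(m+1)T_0]$, and converts the geometric decay into (\ref{exp}) by writing $t=mT_0+r$. You also correctly identify that all the substantive work is concentrated in the uniform observability inequality (\ref{obs ineq}) proved earlier.
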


\section{Appendix}

\subsection{The proof of convergence (\ref{eq:3.56}).}

In this section we prove the convergence (\ref{eq:3.56}) and remark the importance of the continuity of the Kelvin-Voigt damping function $a=a(x)$ in a neighbourhood of $\Omega \backslash A$.

\begin{Lemma}\label{aux lemma}
Let $\{f_k\}_{k\in \mathbb{K}}$ be a sequence of functions in $L^2(\mathcal{O})$, where $\mathcal{O}$ is open set of $\mathbb{R}^d$. Let us assume that $f_k \rightharpoonup f$ weakly in $L^2(\mathcal{O})$. Then, $f_k|_{\mathcal{O}\backslash K} \rightharpoonup  f|_{\mathcal{O}\backslash K}$ weakly in $L^2(\Omega\backslash K)$ for all compact set $K$ contained in $\mathcal{O}$.
\end{Lemma}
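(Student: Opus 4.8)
The plan is to reduce the statement to the defining property of weak convergence by exhibiting an isometric extension-by-zero operator from $L^2(\mathcal{O}\backslash K)$ into $L^2(\mathcal{O})$. Since $L^2$ is a Hilbert space, weak convergence $f_k \rightharpoonup f$ in $L^2(\mathcal{O})$ means precisely that
\begin{equation*}
\int_{\mathcal{O}} f_k\,\varphi\,dx \longrightarrow \int_{\mathcal{O}} f\,\varphi\,dx, \qquad \text{for all } \varphi \in L^2(\mathcal{O}),
\end{equation*}
where we have identified $L^2(\mathcal{O})$ with its dual. To establish the conclusion, it suffices to verify the analogous convergence of the restricted pairings against an arbitrary test function on the smaller domain.

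First I would fix an arbitrary $\psi \in L^2(\mathcal{O}\backslash K)$ and define its extension by zero,
\begin{equation*}
\widetilde{\psi}(x) := \begin{cases} \psi(x), & x \in \mathcal{O}\backslash K,\\ 0, & x \in K. \end{cases}
\end{equation*}
The key observation is that $\widetilde{\psi} \in L^2(\mathcal{O})$ with $\|\widetilde{\psi}\|_{L^2(\mathcal{O})} = \|\psi\|_{L^2(\mathcal{O}\backslash K)}$, so the zero extension is a legitimate test function on the whole of $\mathcal{O}$. Since $\widetilde{\psi}$ vanishes on $K$, for every $k$ one has
\begin{equation*}
\int_{\mathcal{O}\backslash K} \left( f_k\big|_{\mathcal{O}\backslash K} \right)\psi\,dx = \int_{\mathcal{O}} f_k\,\widetilde{\psi}\,dx .
\end{equation*}

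The conclusion then follows immediately by applying the weak convergence in $L^2(\mathcal{O})$ to the test function $\widetilde{\psi}$: the right-hand side converges to $\int_{\mathcal{O}} f\,\widetilde{\psi}\,dx = \int_{\mathcal{O}\backslash K} ( f|_{\mathcal{O}\backslash K})\psi\,dx$. Because $\psi \in L^2(\mathcal{O}\backslash K)$ was arbitrary, this is exactly the assertion that $f_k|_{\mathcal{O}\backslash K} \rightharpoonup f|_{\mathcal{O}\backslash K}$ weakly in $L^2(\mathcal{O}\backslash K)$. Honestly, there is no real obstacle here: the statement is a soft functional-analytic fact, and the compactness of $K$ plays no role in the argument (any measurable subset would do); it is stated this way only because the intended application takes $K = A$ and restricts to $\omega = \Omega \backslash A$, where the continuity of $a(\cdot)$ on $\overline{\omega}$ is then exploited separately to identify the weak limit of $\sqrt{a(x)}\nabla \partial_t u_k$.
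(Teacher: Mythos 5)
Your proof is correct and follows exactly the same route as the paper's: test against an arbitrary element of $L^2(\mathcal{O}\backslash K)$ via its extension by zero, which the paper also denotes $\Phi_K$. You merely spell out the isometry and the pairing identity that the paper leaves implicit.
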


\begin{proof}
The proof consists in considering $\phi\in L^2(\mathcal{O}\backslash K)$ and defining
\begin{equation}
\Phi_K(x) :=\left\{
\begin{aligned}
&\phi~\hbox{ in } \mathcal{O}\backslash K,\\
& 0~\hbox{ in } K,
\end{aligned}
\right.
\end{equation}
which is a $L^2(\mathcal{O})$ function.
\end{proof}

\medskip

Let's prove convergence (\ref{eq:3.56}). For the sake of simplicity, we are going to consider $a\in C^{\infty}(\Omega \backslash A)$.  But, using distributions of finite order we should just consider $a$ of class $C^0$.

The boundedness of $\sqrt{a(x)} \nabla \partial_t u_k$ in $L^2(\Omega \times (0,T))$ implies that there exists $\chi \in L^2(\Omega \times (0,T))$ verifying
\begin{eqnarray}\label{convI}
\sqrt a(x)  \nabla \partial_t u_k \rightharpoonup \chi ~\hbox{ weakly in } L^2(\Omega \times (0,T)).
\end{eqnarray}

Then, since $a \equiv 0$ in $A$ it follows that $\chi := 0$ a. e. in $A$. It remains to prove that $\chi = \sqrt{a(x)} \nabla \partial_t u$ in $\Omega \backslash A$ and $a(x)>0$ in $\Omega \backslash A$.

Employing convergence (\ref{conver2}), that is, $\partial_t u_k \rightharpoonup \partial_t u$ weakly in $L^2(\Omega \times (0,T))$ and Lemma \ref{aux lemma}, we infer that $\partial_t u_k \rightharpoonup \partial_t u$ weakly in $L^2((\Omega\backslash A) \times (0,T))$ and, consequently,
$$\nabla \partial_t u_k \rightarrow \nabla \partial_t u \hbox{  in } \mathcal{D}'((\Omega\backslash A) \times (0,T)).$$

Since $a\in C^{\infty}(\Omega \backslash A)$ and $a(x)>0$ in $\Omega\backslash A$,
$$\sqrt{a} \nabla \partial_t u_k \rightarrow \sqrt{a} \nabla \partial_t u \hbox{  in } \mathcal{D}'((\Omega\backslash A) \times (0,T)).$$

In view of Lemma \ref{aux lemma}, we obtain that $\sqrt{a} \nabla \partial_t u_k \rightarrow \chi$ weakly in $L^2((\Omega\backslash A) \times (0,T))$ and we conclude that $\chi= \sqrt{a} \nabla \partial_t u$.

\subsection{Microlocal Analysis Background}

For the readers comprehension, we will announce some results which can be found in Burq and G\'erard \cite{Burq-Gerard}  and in G\'erard \cite{Gerad} and were used in the proof of the exponential stabilization.

\begin{Theorem}\label{Theoremak 4.49}
Let $\{u_n\}_{n\in \mathbb{N}}$ be  a bounded sequence in $L_{loc}^2(\mathcal{O})$ such that it converges weakly to zero in $L_{loc}^2(\mathcal{O})$. Then, there exists a subsequence $\{u_{\varphi(n)}\}$ and a positive Radon measure $\mu$ on $T^1\mathcal{O}:=\mathcal{O}\times S^{d-1}$ such that for all pseudo-differential operator $A$ of order $0$ on $\Omega$ which admits a principal symbol $\sigma_0(A)$ and for all $\chi \in C_0^\infty(\mathcal{O})$ such that $\chi \sigma_0(A)=\sigma_0(A)$, one has
\begin{eqnarray}\label{M6}
\left(A(\chi u_{\varphi(n)}), \chi u_{\varphi_n}\right)_{L^2}\underset{n\rightarrow +\infty}\longrightarrow \int_{\mathcal{O} \times S^{n-1}}\sigma_0(A)(x,\xi)\,d\mu(x,\xi).
\end{eqnarray}
\end{Theorem}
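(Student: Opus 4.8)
The plan is to realize the desired object $\mu$ as the Riesz representation of a positive linear functional defined on principal symbols. Writing $a:=\sigma_0(A)$, which is homogeneous of degree $0$ in $\xi$ and hence determined by its restriction to the cosphere bundle $\mathcal{O}\times S^{d-1}$, I would study the quadratic quantity $Q_n(A):=(A(\chi u_n),\chi u_n)_{L^2}$. First I would record two reductions coming from the standard pseudodifferential calculus. If $A$ has order $\le -1$, then $A$ maps $L^2_{loc}$ into $H^1_{loc}$, and composing with the compact Rellich embedding $H^1_{loc}\hookrightarrow L^2_{loc}$ shows $A(\chi u_n)\to 0$ strongly in $L^2_{loc}$; since $\chi u_n\rightharpoonup 0$ weakly in $L^2$, this forces $Q_n(A)\to 0$. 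Consequently $\lim_n Q_n(A)$ depends only on the principal symbol $a$. Moreover, decomposing $a=a_1+ia_2$ with $a_1,a_2$ real and choosing self-adjoint operators $A_1,A_2$ with $\sigma_0(A_j)=a_j$ (e.g. symmetrize any representative), one has $A=A_1+iA_2$ modulo order $-1$, so it suffices to treat self-adjoint $A$ with real symbol, for which $Q_n(A)\in\mathbb{R}$.

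The analytic heart of the argument is the sharp G\r{a}rding inequality. Applied to a self-adjoint $A$ whose real principal symbol satisfies $a\ge 0$, it yields a constant $C$ with
\begin{equation*}
(A(\chi u_n),\chi u_n)_{L^2}\ \ge\ -C\,\|\chi u_n\|_{H^{-1/2}}^{2}.
\end{equation*}
Because $u_n\rightharpoonup 0$ weakly in $L^2_{loc}$ and $\chi$ has compact support, the compact embedding $L^2_{loc}\hookrightarrow H^{-1/2}_{loc}$ forces $\|\chi u_n\|_{H^{-1/2}}\to 0$, whence $\liminf_n Q_n(A)\ge 0$. Applying the very same inequality to the nonnegative symbol $(\sup a)-a$ gives the companion bound $\limsup_n Q_n(A)\le(\sup a)\,\limsup_n\|\chi u_n\|_{L^2}^{2}$. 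Together these two estimates show that any accumulation value of $Q_n(A)$ is controlled by $\|a\|_{\infty}$ and is nonnegative whenever $a\ge 0$.

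Next I would fix a single subsequence. Choosing a countable family $\{a^{(j)}\}$ dense in the uniform norm in $C(\mathcal{O}\times S^{d-1})$ and representing each by an operator $A^{(j)}$, a diagonal extraction produces one subsequence $\{u_{\varphi(n)}\}$ along which $\lim_n Q_n(A^{(j)})$ exists for every $j$. By the boundedness estimate of the previous paragraph, the assignment $a\mapsto \lim_n Q_n(A)$ extends uniquely to a bounded linear functional $\ell$ on $C(\mathcal{O}\times S^{d-1})$, and by the positivity estimate $\ell(a)\ge 0$ whenever $a\ge 0$. The Riesz representation theorem then furnishes a positive Radon measure $\mu$ on $\mathcal{O}\times S^{d-1}$ with $\ell(a)=\int_{\mathcal{O}\times S^{d-1}} a\,d\mu$, which is precisely \eqref{M6}. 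A final routine verification, using the hypothesis $\chi\,\sigma_0(A)=\sigma_0(A)$ together with the composition formula, confirms that $Q_n(A)$ is insensitive (modulo terms absorbed by the order $-1$ reduction) to the particular cutoff $\chi$, so that $\mu$ is intrinsic.

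I expect the main obstacle to be the sharp G\r{a}rding inequality and the surrounding symbolic bookkeeping: establishing the one-sided lower bound with a loss of only half a derivative, checking that adjoints and compositions reproduce the expected principal symbols up to order $-1$, and ensuring that all lower-order contributions genuinely vanish in the limit through the weak-to-strong compactness in $H^{-1/2}_{loc}$. Once these calculus facts are secured, the passage from the functional $\ell$ to the measure $\mu$ is a soft functional-analytic step.
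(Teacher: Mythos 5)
Your argument is essentially correct, but note that the paper does not prove this statement at all: Theorem \ref{Theoremak 4.49} is quoted as background in the Appendix, with the proof deferred to G\'erard's paper \cite{Gerad} and the Burq--G\'erard lecture notes \cite{Burq-Gerard}. What you have reconstructed is precisely the classical existence proof from those sources: reduce modulo operators of order $-1$ (which contribute nothing against a weakly null $L^2$ sequence, by Rellich), establish asymptotic positivity and boundedness of $A\mapsto\lim_n(A\chi u_n,\chi u_n)$, extract a diagonal subsequence over a countable dense family of symbols, and invoke Riesz representation. The one place where you diverge from G\'erard's own route is the positivity step: he avoids the sharp G\r{a}rding inequality by the Friedrichs square-root trick, writing $\mathrm{Op}(a+\varepsilon)=B^{*}B+R$ with $B=\mathrm{Op}(\sqrt{a+\varepsilon})$ and $R$ of order $-1$, so that $\liminf_n Q_n(A)\ge-\varepsilon\limsup_n\|\chi u_n\|_{L^2}^2$ and then $\varepsilon\to 0$; this buys a more elementary argument at the cost of an extra limit, whereas your use of sharp G\r{a}rding with the compact embedding $L^2_{\mathrm{comp}}\hookrightarrow H^{-1/2}$ is heavier machinery but more direct. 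Two small points to tidy: the dense countable family should be taken in $C_c(\mathcal{O}\times S^{d-1})$ (or on an exhaustion by compacts), since $C(\mathcal{O}\times S^{d-1})$ with the uniform norm is not separable for noncompact $\mathcal{O}$ --- harmless here because the hypothesis $\chi\,\sigma_0(A)=\sigma_0(A)$ forces compact $x$-support anyway; and you should state explicitly that $\limsup_n\|\chi u_n\|_{L^2}$ is finite (it is, by the assumed boundedness in $L^2_{loc}$), since that constant enters your norm bound for the functional $\ell$.
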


\begin{Definition}
${}$

\smallskip

Under the circumstances of Theorem \ref{Theoremak 4.49} $\mu$ is called the \underline{\bf microlocal defect measure} of the sequence $\{u_{\varphi(n)}\}_{n\in \mathbb{N}}$.
\end{Definition}

\begin{Remark}
Theorem \ref{Theoremak 4.49} assures that for all bounded sequence $\{u_n\}_{n\in \mathbb{N}}$ of $L_{loc}^2(\mathcal{O})$ which converges weakly to zero, the existence of a subsequence admitting a microlocal defect measure. We observe that from (\ref{M6}) in the particular case when $A=f\in C_0^\infty(\mathcal{O})$, it follows that
\begin{eqnarray}\label{M22}
\int_\Omega f(x) |u_{\varphi(n)}(x)|^2\,dx \rightarrow \int_{\mathcal{O} \times S^{d-1}}f(x)\,d\mu(x,\xi),
\end{eqnarray}
so that $u_{\varphi(n)}$ converges to $0$ strongly  if and only if $\mu=0$.
\end{Remark}

The second important result reads as follows.

\begin{Theorem}\label{Theoremak 4.55}
Let $P$ be a differential operator of order $m$ on $\Omega$ and let $\{u_n\}$ a bounded sequence of $L_{loc}^2(\mathcal{O})$ which converges weakly to $0$ and admits a m.d.m. $\mu$. The following statement are equivalents:
\begin{eqnarray*}
&(i)&Pu_n \underset{n \rightarrow +\infty}\longrightarrow 0 \hbox{ strongly in }H_{loc}^{-m}(\mathcal{O})~(m>0). \\
&(ii)& \hbox{supp} (\mu) \subset \{(x,\xi)\in \mathcal{O} \times S^{n-1}: \sigma_m(P)(x,\xi)=0\}.
\end{eqnarray*}
\end{Theorem}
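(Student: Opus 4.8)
The plan is to reduce the statement about $H^{-m}_{loc}$ to one about $L^2_{loc}$ by composing $P$ with a fixed elliptic operator of order $-m$, and then to read off the equivalence from the way the microlocal defect measure transforms under operators of order $0$. First I would fix $Q:=(1-\Delta)^{-m/2}$, an elliptic pseudo-differential operator of order $-m$ which realizes the identity $\|v\|_{H^{-m}}=\|Qv\|_{L^2}$, and set $B:=QP$. By the symbolic calculus $B$ is a pseudo-differential operator of order $0$ whose principal symbol is $\sigma_0(B)(x,\xi)=|\xi|^{-m}\sigma_m(P)(x,\xi)$. Since $\sigma_m(P)$ is homogeneous of degree $m$ in $\xi$, the symbol $\sigma_0(B)$ is homogeneous of degree $0$ and hence well defined on the cosphere bundle $\mathcal{O}\times S^{n-1}$, where it agrees with $\sigma_m(P)$ restricted to $|\xi|=1$. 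In particular, on $\mathcal{O}\times S^{n-1}$ one has $\sigma_0(B)=0$ exactly on the characteristic set $\{\sigma_m(P)=0\}$.

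The analytic core is the transformation rule for $\mu$ under an operator of order $0$: if $\{u_n\}$ admits the microlocal defect measure $\mu$ and $A$ has order $0$, then $\{Au_n\}$ admits $|\sigma_0(A)|^2\mu$ as microlocal defect measure. I would obtain this by applying Theorem \ref{Theoremak 4.49} to the order-$0$ operator $A^*A$, whose principal symbol is $|\sigma_0(A)|^2$, after inserting the admissible cut-offs $\chi$, so that
\[
\|A(\chi u_n)\|_{L^2}^2=\big((A^*A)(\chi u_n),\chi u_n\big)_{L^2}\longrightarrow \int_{\mathcal{O}\times S^{n-1}}|\sigma_0(A)(x,\xi)|^2\,d\mu(x,\xi).
\]
Combined with the remark following Theorem \ref{Theoremak 4.49} — that a bounded sequence converges strongly to $0$ in $L^2_{loc}$ if and only if its measure vanishes — this yields the equivalence: $\{Au_n\}$ converges strongly to $0$ in $L^2_{loc}$ if and only if $|\sigma_0(A)|^2\mu\equiv 0$.

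With these tools in hand, both implications are short. For $(ii)\Rightarrow(i)$, assume $\operatorname{supp}(\mu)\subset\{\sigma_m(P)=0\}=\{\sigma_0(B)=0\}$; then $|\sigma_0(B)|^2\mu\equiv 0$, so $Bu_n=QPu_n\to 0$ strongly in $L^2_{loc}$, which by the choice of $Q$ is equivalent to $Pu_n\to 0$ strongly in $H^{-m}_{loc}$. For $(i)\Rightarrow(ii)$, assume $Pu_n\to 0$ in $H^{-m}_{loc}$; then $Bu_n=QPu_n\to 0$ in $L^2_{loc}$, whence $|\sigma_0(B)|^2\mu\equiv 0$. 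On the open subset $\{\sigma_m(P)\neq 0\}$ of $\mathcal{O}\times S^{n-1}$ we have $\sigma_0(B)\neq 0$, which forces $\mu$ to vanish there, and therefore $\operatorname{supp}(\mu)\subset\{\sigma_m(P)=0\}$.

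The step I expect to be the main obstacle is the bookkeeping that links $\|Pu_n\|_{H^{-m}_{loc}}$ with $\|QPu_n\|_{L^2_{loc}}$. Since $Q$ is nonlocal it does not commute with the localizing cut-offs $\chi$, and one must check that the commutator $[Q,\chi]$, a pseudo-differential operator of order $-m-1$ (one order smoother than $Q$), maps the bounded sequence into a family that converges strongly to $0$ on compact sets and hence contributes nothing to the defect measure. The same care is required when passing from the global identity $\|v\|_{H^{-m}}=\|Qv\|_{L^2}$ to its localized version, and in verifying that the smooth but variable lower-order coefficients of $P$ affect $B$ only below its principal symbol, so that they do not alter the limiting measure.
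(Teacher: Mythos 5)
The paper does not prove this statement at all: it is quoted verbatim as background from G\'erard's paper on microlocal defect measures and from the Burq--G\'erard lecture notes (the text says the appendix results ``can be found in'' those references), so there is no in-paper argument to compare against. Your proposal is essentially the classical proof from those sources, and it is correct. The reduction $B:=QP$ with $Q$ elliptic of order $-m$, the transformation rule $\mu\mapsto|\sigma_0(A)|^2\mu$ obtained by testing $A^*A$ against Theorem \ref{Theoremak 4.49}, and the characterization of strong $L^2_{loc}$ convergence by the vanishing of the measure together give both implications exactly as you describe. The three technical points you flag are the right ones and all resolve favourably: $[Q,\chi]$ has order $-m-1<0$, and more generally any operator of strictly negative order sends a bounded, weakly null $L^2$ sequence to a strongly null sequence on compacta by Rellich, which disposes of both the cut-off bookkeeping and the lower-order part $P-P_m$ (since $Q(P-P_m)$ has order $-1$). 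One small precision worth adding: in the step $(i)\Rightarrow(ii)$ you conclude from $\int|\sigma_0(B)|^2\,d\mu=0$ that $\mu$ vanishes on the open set $\{\sigma_m(P)\neq 0\}$; this uses positivity of $\mu$ together with the fact that the limit formula holds for the full sequence (not merely a subsequence), which is guaranteed because $\mu$ is assumed to be the m.d.m. of $\{u_n\}$ itself.
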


\begin{Theorem}\label{Theoremak 4.56}
Let $P$ be a differential operator of order $m$ on $\Omega$, verifying $P^\ast=P$, and let $\{u_n\}$ be a bounded sequence in $L_{loc}^2(\mathcal{O})$ which converges weakly to $0$ and it admits a m.d.m. $\mu$. Let us assume that $P u_n \underset{n\rightarrow +\infty}\longrightarrow 0$ strongly in $H_{loc}^{1-m}(\mathcal{O})$. Then, for all function $a\in C^\infty(\mathcal{O} \times \mathbb{R}^n \backslash\{0\} )$ homogeneous of degree $1-m$ in the second variable and with compact support in the first one,
\begin{eqnarray}\label{M26}
\int_{\Omega \times s^{n-1}}\{a,p\}(x,\xi)\,d\mu(x,\xi)=0.
\end{eqnarray}
\end{Theorem}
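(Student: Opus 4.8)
The plan is to prove \eqref{M26} by the classical positive--commutator (propagation) argument for microlocal defect measures, routing \emph{both} halves of a commutator through $Pu_n$ and exploiting the one extra derivative furnished by the hypothesis $Pu_n\to 0$ in $H^{1-m}_{loc}(\mathcal{O})$. First I would quantize the test symbol: since $a$ is homogeneous of degree $1-m$ in $\xi$ and has compact support in $x$, choose a properly supported pseudodifferential operator $A$ of order $1-m$ whose principal symbol is $a$ and whose full symbol is compactly supported in $x$. Set
\[
B:=\frac{1}{i}[A,P]=\frac{1}{i}\,(AP-PA).
\]
Because $P$ has order $m$ and $A$ has order $1-m$, the operator $B$ has order $0$, its symbol is compactly supported in $x$, and by the symbolic calculus its principal symbol is the Poisson bracket $\sigma_0(B)=\{a,p\}$, with $p=\sigma_m(P)$; the lower--order terms of the calculus produce operators of strictly negative order, hence locally compact, which will not affect the limit. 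Applying Theorem \ref{Theoremak 4.49} to the order--zero operator $B$ (the cut--off $\chi$ of that statement being harmlessly absorbed since the symbol of $B$ is compactly supported in $x$), I obtain
\[
(Bu_n,u_n)_{L^2}\;\underset{n\to\infty}{\longrightarrow}\;\int_{\mathcal{O}\times S^{d-1}}\{a,p\}(x,\xi)\,d\mu(x,\xi).
\]

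The heart of the matter is then to show that the left--hand side tends to $0$. Expanding the commutator and invoking the self--adjointness hypothesis $P^{\ast}=P$,
\[
(Bu_n,u_n)=\frac{1}{i}\Big[(A\,Pu_n,u_n)-(PA\,u_n,u_n)\Big]=\frac{1}{i}\Big[(A(Pu_n),u_n)-(Au_n,Pu_n)\Big].
\]
I would treat the two terms separately. For the first, $A$ (of order $1-m$, properly supported) maps $H^{1-m}_{loc}$ continuously into $L^2_{loc}$; since $Pu_n\to 0$ strongly in $H^{1-m}_{loc}$, we get $A(Pu_n)\to 0$ strongly in $L^2_{loc}$, and pairing against $\{u_n\}$, bounded in $L^2_{loc}$, gives $(A(Pu_n),u_n)\to 0$. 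For the second, $A$ maps $L^2_{loc}$ into $H^{m-1}_{loc}$, so $\{Au_n\}$ is bounded in $H^{m-1}$ with supports in a fixed compact set; as $H^{1-m}_{loc}$ is the dual of $H^{m-1}_{\mathrm{comp}}$ and $Pu_n\to 0$ strongly there, the pairing $(Au_n,Pu_n)\to 0$. Hence $(Bu_n,u_n)\to 0$, and comparing with the limit above yields $\int\{a,p\}\,d\mu=0$. If $a$ is complex--valued one simply applies this to its real and imaginary parts.

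The main obstacle, which is also the conceptual content of the theorem, is that the generic bound only provides $Pu_n\to 0$ in $H^{-m}_{loc}$, and this is \emph{not} sufficient to annihilate the commutator pairing. It is precisely the gain of one derivative in the hypothesis ($H^{1-m}_{loc}$ rather than $H^{-m}_{loc}$), combined with the self--adjointness $P^{\ast}=P$ --- which lets one rewrite the troublesome term as $(Au_n,Pu_n)$ instead of leaving a genuinely top--order pairing --- that makes both halves vanish simultaneously. The remaining work is bookkeeping within the pseudodifferential calculus: checking that $\sigma_0\!\big(\tfrac{1}{i}[A,P]\big)=\{a,p\}$ with the lower--order remainders negligible, and that the localizations (proper support and compact $x$--support of $A$, the cut--off of Theorem \ref{Theoremak 4.49}, and commutators of $P$ with cut--offs) contribute only terms that disappear in the limit. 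These are standard facts, for which I would refer to G\'erard \cite{Gerad} and Burq--G\'erard \cite{Burq-Gerard}.
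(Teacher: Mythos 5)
The paper does not prove this theorem at all: it is stated in the Appendix as background material, with the proof deferred to G\'erard \cite{Gerad} and Burq--G\'erard \cite{Burq-Gerard}. Your argument is precisely the standard positive-commutator proof given in those references --- quantize $a$ to an operator $A$ of order $1-m$, note that $\tfrac{1}{i}[A,P]$ has order $0$ with principal symbol $\{a,p\}$ (up to sign convention), and use $P^{\ast}=P$ together with the mapping properties $A:H^{1-m}_{loc}\to L^2_{loc}$ and $A:L^2_{loc}\to H^{m-1}_{comp}$ to kill both halves of the commutator pairing --- and it is correct.
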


Let us consider the wave operator in a inhomogeneous  medium:
\begin{eqnarray*}
	\rho(x) \partial_t^2 - \sum_{i,j=1}^n \partial_{x_i}\left[K(x) \partial_{x_j}\right].
\end{eqnarray*}

Using the notation $D_j=\frac{1}{i}\partial_j$ we can write
\begin{eqnarray*}
	P(t, x, D_t, D_{x})= - \rho(x) D_t^2 +   \sum_{i,j=1}^n D_{x_i}[K(x)D_{x_j}],\quad D_x=(D_{x_1},\cdots, D_{x_n}),
\end{eqnarray*}
whose principal symbol $p(t,x,\tau,\xi)$ is given by
\begin{eqnarray}\label{5.5}
p(t,x,\tau,\xi)=-\rho(x) \tau^2 + K(x) \,\xi\cdot \xi,\quad\xi=(\xi_1,\cdots,\xi_n),
\end{eqnarray}
where $t\in \mathbb{R}$,~ $x\in \Omega\subset \mathbb{R}^d$,~ $(\tau,\xi)\in \mathbb{R}\times \mathbb{R}^d$.

\begin{Proposition}\label{proposition5.1}
	Unless a change of variables, the bicharacteristics of \eqref{5.5} are curves of the form
	\begin{eqnarray*}
		t \mapsto \left(t, x(t), \tau, -\tau\left(\frac{K(x(t))}{\rho(x(t))}\right)^{-1}\dot{x}(t) \right),
	\end{eqnarray*}
	where $t \mapsto x(t)$ is a geodesic of the metric $G=\left(K/\rho\right)^{-1}$ on $\Omega$, parameterized by the curvilinear abscissa.
\end{Proposition}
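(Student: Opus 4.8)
The plan is to realise the bicharacteristics as the integral curves of the Hamiltonian vector field $H_p$ of the principal symbol $p$ given in \eqref{5.5}, restricted to the characteristic set $\{p=0\}$, and then to reparameterise the parameter so as to recognise the spatial projection as the (co)geodesic flow of $G=(K/\rho)^{-1}$. Denoting by a dot the derivative with respect to the Hamiltonian parameter $s$, the equations of $H_p$ read
\[
\dot t = \frac{\partial p}{\partial \tau} = -2\rho(x)\tau,\qquad
\dot\tau = -\frac{\partial p}{\partial t} = 0,
\]
\[
\dot x = \frac{\partial p}{\partial \xi} = 2K(x)\xi,\qquad
\dot\xi = -\frac{\partial p}{\partial x} = \tau^2\,\nabla\rho(x) - \nabla_x\!\left(K(x)\xi\cdot\xi\right).
\]
Since the coefficients do not depend on $t$, the covariable $\tau$ is conserved along the flow; moreover $\dot t=-2\rho\tau\neq 0$ away from $\tau=0$, so $t$ itself may be used as a parameter. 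Dividing $\dot x$ by $\dot t$ yields $\dfrac{dx}{dt} = -\dfrac{1}{\tau}\,\dfrac{K(x)}{\rho(x)}\,\xi$, and hence
\[
\xi = -\tau\left(\frac{K(x(t))}{\rho(x(t))}\right)^{-1}\dot x(t),
\]
which is exactly the covariable announced in the statement.

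Next I would introduce the new parameter $\sigma$ defined by $d\sigma = 2\rho\,ds$ (equivalently $d\sigma = -dt/\tau$) and verify that $\sigma\mapsto(x(\sigma),\xi(\sigma))$ solves Hamilton's equations for the geodesic Hamiltonian
\[
H(x,\xi)=\frac{1}{2\rho(x)}\,K(x)\xi\cdot\xi = \tfrac12\, G^{ij}(x)\,\xi_i\xi_j,
\]
the generator of the cogeodesic flow of $G=(K/\rho)^{-1}$. The position equation is immediate, since $\dfrac{dx}{d\sigma}=\dfrac{1}{\rho}K\xi=\partial_\xi H$. For the momentum one computes $\partial_x H = -\dfrac{1}{2\rho^2}\nabla\rho\,(K\xi\cdot\xi)+\dfrac{1}{2\rho}\nabla_x(K\xi\cdot\xi)$; here I would invoke the characteristic relation $K\xi\cdot\xi=\rho\tau^2$, valid on $\{p=0\}$, to rewrite the first term as $\dfrac{\tau^2}{2\rho}\nabla\rho$, so that $-\partial_x H = \dfrac{\tau^2}{2\rho}\nabla\rho-\dfrac{1}{2\rho}\nabla_x(K\xi\cdot\xi)=\dfrac{1}{2\rho}\dot\xi=\dfrac{d\xi}{d\sigma}$, term by term. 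Consequently the spatial projection $x(\sigma)$ is a geodesic of $G$ and $\xi = G\,\dfrac{dx}{d\sigma}$ is its dual momentum, which, combined with $d\sigma=-dt/\tau$, reproduces the displayed formula for $\xi$.

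To close, I would check that the parameterisation is (proportional to) arc length: a direct computation gives
\[
G_{ij}\,\frac{dx^i}{d\sigma}\frac{dx^j}{d\sigma}=\frac{1}{\rho}\,K\xi\cdot\xi=\tau^2,
\]
constant along the curve, so the geodesic is traversed at constant $G$-speed $|\tau|$ and, after the harmless affine rescaling absorbed in the phrase \emph{up to a change of variables}, is parameterised by the curvilinear abscissa. The one genuinely delicate point is the momentum equation: the $\nabla\rho$ contribution produced by the geodesic Hamiltonian coincides with the one appearing in $\dot\xi$ only after the constraint $K\xi\cdot\xi=\rho\tau^2$ is used, and the two flows do differ off $\{p=0\}$. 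This is legitimate precisely because bicharacteristics live on the characteristic set, which is invariant since $H_p\,p=0$; this is exactly where the degree-two homogeneity of $p$ and the conservation of $\tau$ enter.
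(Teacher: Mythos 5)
The paper offers no proof of this proposition: it appears in the appendix among the microlocal-analysis background results that are simply quoted from Burq and G\'erard \cite{Burq-Gerard}, so there is no in-paper argument to measure yours against. Your computation is correct and self-contained, and it is the standard derivation one would find in that reference: the Hamilton equations for $p=-\rho\tau^2+K\xi\cdot\xi$ are written correctly, the conservation of $\tau$ together with $\dot t=-2\rho\tau$ gives exactly the covariable $\xi=-\tau\left(K/\rho\right)^{-1}\dot x(t)$ of the statement, and the reparameterization $d\sigma=-dt/\tau$ turns the $(x,\xi)$-projection into the cogeodesic flow of $H=\tfrac{1}{2\rho}K\xi\cdot\xi$, i.e.\ of the metric $G=(K/\rho)^{-1}$. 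You also correctly isolate and justify the one genuinely delicate step: the momentum equations of the two flows agree only after substituting $K\xi\cdot\xi=\rho\tau^2$, which is legitimate because the characteristic set is invariant under $H_p$. Two small remarks. First, on the characteristic set with $\xi\neq 0$ one has $\rho\tau^2=K\xi\cdot\xi>0$, hence $\tau\neq 0$ automatically, so the divisions by $\tau$ need no separate caveat. Second, since $G\,\dot x(t)\cdot\dot x(t)=\tau^{-2}\cdot\tau^2=1$, the $t$-parameterization is already exactly the unit-speed (arc-length) parameterization, so the final affine rescaling you invoke is not actually needed.
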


The main result is the following:

 \begin{Theorem}\label{Theoremak 4.60}
 Let $P$ be a self-adjoint differential operator of order $m$ on $\mathcal{O}$ which admits a principal symbol $p$. Let $\{u_n\}_n$ be a bounded sequence in $L_{loc}^2(\mathcal{O})$ which converges weakly to zero, with a microlocal defect measure $\mu$. Let us assume that $P u_n$ converges to $0$ in $H_{loc}^{-(m-1)}(\mathcal{O})$. Then the support of $\mu$, $\hbox{supp}(\mu)$, is a union of curves like $s\in I \mapsto \left(x(s), \frac{\xi(s)}{|\xi(s)|}\right)$, where $s\in I \mapsto (x(s),\xi(s))$ is a bicharacteristic of $p$.
 \end{Theorem}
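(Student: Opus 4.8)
The plan is to obtain the geometric description of $\operatorname{supp}(\mu)$ from the two structural results already established in the Appendix: Theorem \ref{Theoremak 4.55}, which pins the support to the characteristic set of $p$, and Theorem \ref{Theoremak 4.56}, which furnishes an infinitesimal invariance identity for $\mu$ along the Hamiltonian field of $p$. The conclusion then reduces to a purely dynamical statement, namely that a positive measure annihilated by a vector field has support equal to a union of the integral curves of that field.

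First I would note that the hypothesis $Pu_n \to 0$ in $H^{-(m-1)}_{loc}(\mathcal{O})$ is stronger than convergence in $H^{-m}_{loc}(\mathcal{O})$, since $-(m-1)\geq -m$ yields the continuous embedding $H^{-(m-1)}_{loc}\hookrightarrow H^{-m}_{loc}$. Hence the equivalence in Theorem \ref{Theoremak 4.55} applies and gives that $\operatorname{supp}(\mu)$ is contained in the characteristic set $\{(x,\xi)\in\mathcal{O}\times S^{n-1}:p(x,\xi)=0\}$. Next, since $P=P^\ast$ and $Pu_n\to 0$ in $H^{1-m}_{loc}(\mathcal{O})=H^{-(m-1)}_{loc}(\mathcal{O})$, Theorem \ref{Theoremak 4.56} provides
\begin{equation*}
\int_{\mathcal{O}\times S^{n-1}}\{a,p\}(x,\xi)\,d\mu(x,\xi)=0
\end{equation*}
for every $a\in C^\infty(\mathcal{O}\times(\mathbb{R}^n\setminus\{0\}))$ that is homogeneous of degree $1-m$ in $\xi$ and compactly supported in $x$. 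The structural point is that $p$ is homogeneous of degree $m$ in $\xi$, so its Hamiltonian field $H_p=\partial_\xi p\cdot\partial_x-\partial_x p\cdot\partial_\xi$ raises $\xi$-homogeneity by $m-1$; thus for $a$ of degree $1-m$ the bracket $\{a,p\}=-H_p a$ is homogeneous of degree $0$ and descends to a genuine function on the cosphere bundle $\mathcal{O}\times S^{n-1}$. The identity therefore reads $\langle\mu,H_p a\rangle=0$, which says that the Lie derivative of $\mu$ along the reduced field $\widetilde{H}_p$ (the push-forward of $H_p$ to $\mathcal{O}\times S^{n-1}$, whose integral curves are the projected bicharacteristics described in Proposition \ref{proposition5.1}) vanishes in the distributional sense.

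It remains to convert this infinitesimal invariance into the statement on $\operatorname{supp}(\mu)$. Writing $\Phi_s$ for the flow of $\widetilde{H}_p$ on $\mathcal{O}\times S^{n-1}$, I would show that its complement is flow-invariant. Let $\omega_0\notin\operatorname{supp}(\mu)$, pick a nonnegative $\chi\in C_c^\infty(\mathcal{O}\times S^{n-1})$ with $\chi(\omega_0)>0$ and $\mu(\operatorname{supp}\chi)=0$, and set $g(s):=\int(\chi\circ\Phi_{-s})\,d\mu$. Then $g(0)=0$, and since $\Phi_s$ commutes with its generator one has $\widetilde{H}_p(\chi\circ\Phi_{-s})=(\widetilde{H}_p\chi)\circ\Phi_{-s}$, so differentiating under the integral and applying the invariance identity to the admissible lift of $\chi\circ\Phi_{-s}$ gives $g'(s)=-\int\widetilde{H}_p(\chi\circ\Phi_{-s})\,d\mu=0$. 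Thus $g\equiv 0$ throughout the maximal interval $I$ of existence of the orbit in $\mathcal{O}$, and since $\chi\circ\Phi_{-s}\geq 0$ with $(\chi\circ\Phi_{-s})(\Phi_s\omega_0)=\chi(\omega_0)>0$, this forces $\Phi_s\omega_0\notin\operatorname{supp}(\mu)$ for every $s\in I$. Taking complements, $\operatorname{supp}(\mu)$ is a union of orbits of $\widetilde{H}_p$, which by Proposition \ref{proposition5.1} are exactly the curves $s\mapsto(x(s),\xi(s)/|\xi(s)|)$ with $s\mapsto(x(s),\xi(s))$ a bicharacteristic of $p$; combined with the first step these lie in $\{p=0\}$ and so are null bicharacteristics, completing the proof.

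The main obstacle I expect lies in this last step: one must rigorously justify differentiating $s\mapsto g(s)$ under the integral sign and, above all, arrange the test symbols correctly. Theorem \ref{Theoremak 4.56} only accepts $a$ homogeneous of degree $1-m$ in $\xi$ and compactly supported in $x$, whereas $\chi\circ\Phi_{-s}$ is a priori only a function on the cosphere bundle. I would therefore lift it to a homogeneous degree-$(1-m)$ symbol, using that $\{a,p\}$ is degree $0$ and hence determined by its restriction to $S^{n-1}$, and check that for $s$ ranging over a compact subinterval of $I$ the flow keeps $\operatorname{supp}\chi$ inside a fixed compact subset of $\mathcal{O}$, so the $x$-compactness and the admissibility class are preserved.
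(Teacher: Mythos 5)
A preliminary remark: the paper does not prove Theorem \ref{Theoremak 4.60} at all --- it is announced as background and cited to Burq--G\'erard and G\'erard --- so your proposal must be measured against the standard proof in those sources, whose architecture (localization via Theorem \ref{Theoremak 4.55}, the bracket identity of Theorem \ref{Theoremak 4.56}, then a flow argument on the cosphere bundle) you reproduce correctly in outline.

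There is, however, one genuine error at the central step: the claim that the identity $\int \{a,p\}\,d\mu=0$ ``says that the Lie derivative of $\mu$ along $\widetilde{H}_p$ vanishes,'' and hence that $g'(s)=0$. A smooth function $b$ on $\mathcal{O}\times S^{n-1}$ has a \emph{unique} homogeneous lift of degree $1-m$, namely $a(x,\xi)=|\xi|^{1-m}\,\tilde{b}(x,\xi)$ with $\tilde{b}(x,\xi)=b\left(x,\xi/|\xi|\right)$, and the Leibniz rule produces an extra zeroth-order term: since $\{|\xi|^{1-m},p\}=(1-m)|\xi|^{-1-m}(\xi\cdot\partial_x p)$, one finds
\begin{equation*}
\{a,p\}\big|_{|\xi|=1} = -\widetilde{H}_p b + (1-m)\,(\xi\cdot\partial_x p)\, b .
\end{equation*}
Thus Theorem \ref{Theoremak 4.56} yields $\langle \mu, \widetilde{H}_p b\rangle = (1-m)\,\langle \mu, (\xi\cdot\partial_x p)\, b\rangle$, which is not zero unless $m=1$; in the paper's application $P$ is the wave operator, $m=2$, so the correction term is genuinely present and $(\xi\cdot\partial_x p)$ need not vanish on the characteristic set (Euler's identity kills $\xi\cdot\partial_\xi p=mp$ there, not $\xi\cdot\partial_x p$). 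Consequently, with $b=\chi\circ\Phi_{-s}$ your function $g$ satisfies $g'(s)=(m-1)\,\langle \mu, (\xi\cdot\partial_x p)(\chi\circ\Phi_{-s})\rangle$ rather than $g'(s)=0$: the measure is transported with a multiplicative (exponential) factor, not exactly conserved. You half-spotted the danger in your final paragraph (``arrange the test symbols correctly''), but resolved it incorrectly: it is true that the degree-$0$ bracket is determined by its restriction to $S^{n-1}$, yet that restriction is $-\widetilde{H}_p b$ \emph{plus} the correction term.

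The good news is that the gap is fixable by exactly the device the cited proof uses: since $\mu\geq 0$, $\chi\circ\Phi_{-s}\geq 0$, and for $s$ in a compact subinterval of $I$ all these functions are supported in a fixed compact subset of $\mathcal{O}\times S^{n-1}$ on which $|(m-1)(\xi\cdot\partial_x p)|\leq C$, one gets $|g'(s)|\leq C\, g(s)$, and Gr\"onwall's inequality with $g(0)=0$ forces $g\equiv 0$; your endgame (nonnegativity plus $\chi(\omega_0)>0$ forcing $\Phi_s\omega_0\notin\operatorname{supp}(\mu)$) then goes through unchanged, and tangency of $H_p$ to $\{p=0\}$ (from $H_p p=0$) combined with your first step ensures the orbits through $\operatorname{supp}(\mu)$ are null bicharacteristics. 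The remaining ingredients of your write-up --- the embedding $H^{-(m-1)}_{loc}\hookrightarrow H^{-m}_{loc}$ to invoke Theorem \ref{Theoremak 4.55}, differentiation under the integral, and the compactness bookkeeping for the flow --- are all sound.
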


\end{document}